\title[Continuity of measure-dimension mappings]{Continuity of measure-dimension mappings}
\author{Liangang Ma}
\address{Dept.\ of Mathematical Sciences, Binzhou University, Huanghe 5th Road No. 391, Binzhou 256600, Shandong, P. R. China} 
\email{maliangang000@163.com}
\thanks{The work is supported by ZR2019QA003 from SPNSF and 12001056 from NSFC}    
\newtheorem{theorem}[subsection]{Theorem}
\newtheorem{lemma}[subsection]{Lemma}
\newtheorem{Young's Lemma}[subsection]{Young's Lemma}
\newtheorem{MU Theorem}[subsection]{Mauldin-Urba\'nski Theorem}
\newtheorem{SSU Theorem}[subsection]{Simon-Solomyak-Urba\'nski Theorem}
\newtheorem{Prohorov's Theorem}[subsection]{Prohorov's Theorem}
\newtheorem{proposition}[subsection]{Proposition}
\newtheorem{corollary}[subsection]{Corollary}
\newtheorem{definition}[subsection]{Definition}
\newtheorem{exm}[subsection]{Example}
\newtheorem{rem}[subsection]{Remark}
\numberwithin{equation}{section}
\DeclareMathOperator*{\esssup}{ess\,sup}
\begin{document} 

\begin{abstract}

   We study continuity and discontinuity of the upper and lower (modified) box-counting, Hausdorff, packing, (modified) correlation measure-dimension mappings under the weak, setwise and TV topology on the space of Borel measures respectively in this work. We give various examples to show that no continuity can be guaranteed under the weak, setwise or TV topology on the space of Borel measures for any of these measure-dimension mappings. However, in some particular circumstances or by assuming some restrictions on the measures, we do have some (semi-)continuity results for some of these measure-dimension mappings under the  setwise topology.  In the end we point out some applications of our continuity results on deciding the dimensions of measures on ambient space with some dynamical structures.

 \end{abstract}
 
 \maketitle

\section{Introduction}\label{sec1}

The dimension of a measure on a metrizable space indicates how well the mass is distributed on the space. Its definition usually depends on the dimensions of sets in the space.  P. Mattila, M. Mor\'an and J. M. Rey gave some criterions on evaluating the well-posedness of different concepts of dimensions of measures, see \cite[p220]{MMR}. In their work they treat the dimensions of measures as as a function (functional) from the space of Borel measures to $\mathbb{R}^+=[0,\infty)$: the measure-dimension mappings. From this point of view it is a natural question to ask the continuity of various  measure-dimension mappings, with suitable topology endowed on the space of Borel measures. It would be interesting if the measure-dimension mappings could admit some continuity under some topology in case one is trying to deal with the corresponding dimensions of Borel measures.     

Considering the topological description of the space of Borel measures, the vague, weak, setwise and TV topology are well-known among probabilists, with their fineness in increasing order. One can refer to \cite{Kal2} and \cite{Kle} for the vague and weak topology, while to \cite{FKZ1} and \cite{Las} for the setwise and TV topology on the space of Borel measures. These topologies have lots of applications in various circumstances, for example, see the applications of the vague or weak topology  in particle systems  by Cox-Klenke-Perkins and Kallenberg \cite{CKP, Kal1},  the applications of the weak, setwise or TV topology in Markov decision processes by Feinberg-Kasyanov-Zgurovsky, O. Hern\'andez-Lerma and J. Lasserre in \cite{FKZ2, HL}. The choice of topology is subtle in different circumstances, with better properties possible under finer topology at the cost of more difficulty in guaranteeing convergence of concerning sequences of measures under the finer topology.

Luckily, we find that lots of measure-dimension mappings admit some semi-continuity properties under the setwise topology on the space of Borel measures. Especially, the upper measure-dimension mapping induced from some dimensional mapping satisfying the \emph{countable stability} admits lower semi-continuity under the setwise topology, in words of Falconer \cite[p40]{Fal1}. Upper semi-continuity always holds for any induced measure-dimension mappings under the setwise topology. However, some measure-dimension mappings do not admit any semi-continuity under any of these topologies, for example, the (modified) correlation measure-dimension mapping. We will also give examples to show that none of our concerning measure-dimension mappings admits any continuity if the topology is relaxed from the setwise level to weak level. There are some applications of our continuity results to Hausdorff dimensions of ergodic measures on some Riemannian manifold following Young \cite{You} at the end of the work.

\section{Notations, definitions and the main results}\label{sec2}

In this section we first introduce some popular measure-dimension mappings, then we give definitions of the vague, weak, setwise and TV topology on the space of Borel measures. At the end of the section we present our results on semi-continuity of some measure-dimension mappings under the setwise topology.

Let $(X,\rho)$ be a metric space with its Borel $\sigma$-algebra $\mathcal{B}$. Denote by $\mathcal{M}(X)$ to be the collection of all the Borel measures (note that it is possible that some measures are infinite in $\mathcal{M}(X)$). The collection of all the finite Borel measures on $(X,\mathcal{B})$ is denoted by $\hat{\mathcal{M}}(X)$. A \emph{dimensional mapping} is a real non-negative function
\begin{center}
$dim: \mathcal{B}\rightarrow \mathbb{R}^+$.
\end{center}
Of course the concept is interesting if and only if the function satisfies some reasonable properties, for example, the ones in \cite[p40]{Fal1}. We are particularly interested in those dimensional mappings satisfying the \emph{countable stability}, that is, for any collection of countable sets $\{A_i\}_{i=1}^\infty$,
\begin{center}
$dim(\cup_{i=1}^\infty A_i)=\sup \{dim(A_i)\}_{i=1}^\infty$.
\end{center}

Notable dimensional mappings in this work are the upper and lower (modified) box-counting, Hausdorff and packing dimensional mappings, which are denoted by 
\begin{center}
$\overline{dim}_B, \underline{dim}_B (\overline{dim}_{MB}, \underline{dim}_{MB}), dim_H, dim_P$
\end{center}
 respectively, see \cite{Fal1}. Denote by $dim_B$ ($dim_{MB}$) in case $\overline{dim}_B=\underline{dim}_B$ ($\overline{dim}_{MB}=\underline{dim}_{MB}$).  A \emph{measure-dimension mapping} is a real non-negative function
\begin{center}
$dim: \mathcal{M}(X)\rightarrow \mathbb{R}^+$.
\end{center}  
We abuse notations as one can easily distinguish its meaning of dimensional mapping or measure-dimension mapping from the contexts. Refer to \cite[p220]{MMR} on natural properties of reasonable measure-dimension mappings. The following two dual measure-dimension mappings are highlighted in various circumstances.  

\begin{definition}
Let $dim$ be a dimensional mapping on $(X,\rho)$. The induced \emph{lower} and \emph{upper measure-dimension mappings} from $\mathcal{M}(X)$ to $[0,\infty)$ are defined respectively to be:

\begin{center}
$dim^L(\nu)=\inf\{dim(A): \nu(A)>0, A\in\mathcal{B}\},$  
\end{center}
and
\begin{center}
$dim^U(\nu)=\inf\{dim(A): \nu(X\setminus A)=0, A\in\mathcal{B}\}$
\end{center}
for a measure $\nu\in\mathcal{M}(X)$. 
\end{definition}

It is easy to see that 
\begin{center}
$dim^L(\nu)\leq  dim^U(\nu)$
\end{center}
for any $\nu\in \mathcal{M}(X)$. When taking the dimensional mappings to be 
\begin{center}
$dim_B, \overline{dim}_B, \underline{dim}_B (\overline{dim}_{MB}, \underline{dim}_{MB}), dim_H, dim_P$,
\end{center}
the corresponding lower and upper measure-dimension mappings are denoted respectively by 
\begin{center}
$dim_B^L, \overline{dim}_B^L, \underline{dim}_B^L (\overline{dim}_{MB}^L, \underline{dim}_{MB}^L), dim_H^L, dim_P^L$
\end{center}
and
\begin{center}
$dim_B^U, \overline{dim}_B^U, \underline{dim}_B^U (\overline{dim}_{MB}^U, \underline{dim}_{MB}^U), dim_H^U, dim_P^U$.
\end{center}

In case the lower and upper measure-dimension mappings coincide with each other at some measure $\nu\in \mathcal{M}(X)$, denote the values simply by
\begin{center}
$dim_B(\nu), \overline{dim}_B(\nu), \underline{dim}_B(\nu) \big(\overline{dim}_{MB}(\nu), \underline{dim}_{MB}(\nu)\big), dim_H(\nu), dim_P(\nu)$.
\end{center}
Following Mattila-Mor\'an-Rey, we also pay attention to the correlation measure-dimension mapping $\dim_C $ and its modified version $\dim_{MC}$. The correlation measure-dimension mapping is introduced by P. Grassberger and I. Procaccia \cite{GP1, GP2}. 

\begin{definition}\label{def5}
The \emph{correlation dimension} of $\nu$ (see \cite{Cut}) is defined to be
\begin{center}
$dim_C(\nu)=\lim_{r\rightarrow 0}\cfrac{\log\int\nu(B(x,r))d\nu}{\log r}$
\end{center}
for a measure $\nu\in\mathcal{M}(X)$, in case the above limit exists.
\end{definition}

The modified one $\dim_{MC}$ is given by Pesin \cite{Pes}.

\begin{definition}\label{def6}
The \emph{modified correlation dimension} of $\nu$ is defined to be
\begin{center}
$dim_{MC}(\nu)=\lim_{\delta\rightarrow 0}\sup_{\{A\in\mathcal{B}: \nu(A)\geq 1-\delta\}}\lim_{r\rightarrow 0}\cfrac{\log\int_A\nu(B(x,r))d\nu}{\log r}$
\end{center}
for a measure $\nu\in\mathcal{M}(X)$, in case the above limit exists. 
\end{definition}

\begin{rem}
Regardless of the existence of the above two limits, one can define the \emph{upper} and \emph{lower correlation dimension} as well as the \emph{upper} and \emph{lower modified correlation dimension} as \cite[(2),(4)]{MMR}. We adopt these definitions directly because all the measures in our concerns in this work have the same upper and lower (modified) correlation dimensions.
\end{rem} 

By \cite[Proposition 10.2]{Fal2} and \cite[Lemma 2.8]{MMR}, we have 
\begin{center}
$dim_H^L(\nu)\leq dim_C(\nu)\leq dim_{MC}(\nu)$
\end{center}
for any $\nu\in \mathcal{M}(X)$.

In order to discuss the continuity of the measure-dimension mappings, some topology on $\mathcal{M}(X)$ is necessary. The following concepts apply to $\mathcal{M}(X)$ with topological ambient space $X$.

\begin{definition}
The \emph{vague topology} is the topology with basis 
\begin{center}
$\{\varrho\in\mathcal{M}(X): |\int_X f(x) d\varrho-\int_X f(x) d\nu|<\epsilon \}$
\end{center}
on $\mathcal{M}(X)$ for any continuous function $f: X \rightarrow\mathbb{R}$ with compact support and any real $\epsilon>0$. 
\end{definition}

Refer to  \cite{Kal2, Kle}. As the coarsest topology, no continuity can be guaranteed for any measure-dimension mappings in our concerns under it. However, we still would like to pose it here in case of particular interests for some readers. A finer topology is the weak topology.

\begin{definition}
The \emph{weak topology} on $\mathcal{M}(X)$ is the topology with basis 
\begin{center}
$\{\varrho\in\mathcal{M}(X): |\int_X f(x) d\varrho-\int_X f(x) d\nu|<\epsilon \}$
\end{center}
for any bounded continuous function $f: X \rightarrow\mathbb{R}$ and any real $\epsilon>0$. 
\end{definition}
Refer to \cite{Bil1, Bil2, Kle, Mat}. Unfortunately, still no (semi-)continuity can be guaranteed for any measure-dimension mappings in our concerns under the weak topology in general, see Theorem \ref{thm18}.

\begin{definition}
The \emph{setwise topology} on $\mathcal{M}(X)$ is the topology with basis 
\begin{center}
$\{\varrho\in\mathcal{M}(X): |\int_X f(x) d\varrho-\int_X f(x) d\nu|<\epsilon \}$
\end{center}
for any bounded measurable function $f: X \rightarrow\mathbb{R}$ and any real $\epsilon>0$. 
\end{definition}

Refer to \cite{Doo, FKZ1, GR, HL, Las, LY}. After escalating to the setwise topology, we are excited to find that some measure-dimension mappings admit some semi-continuity under it. Denote by 
\begin{center}
$\nu_n\stackrel{v}{\rightarrow}\nu, \nu_n\stackrel{w}{\rightarrow}\nu, \nu_n\stackrel{s}{\rightarrow}\nu,\nu_n\stackrel{TV}{\rightarrow}\nu$ 
\end{center}
as $n\rightarrow\infty$ for sequences of measures in $\mathcal{M}(X)$ converging under the vague, weak, setwise or TV topology (to be defined later) respectively.

\begin{theorem}\label{thm1}
Let $dim$ be a dimensional mapping satisfying the countable stability property.  The induced upper measure-dimension mapping $dim^U$ is lower semi-continuous under the setwise topology on $\mathcal{M}(X)$, that is, if $\nu_n\stackrel{s}{\rightarrow}\nu$ in $\mathcal{M}(X)$ as $n\rightarrow\infty$, then 
\begin{equation}\label{eq2}
\liminf_{n\rightarrow\infty} dim^U (\nu_n)\geq dim^U (\nu).
\end{equation}
\end{theorem}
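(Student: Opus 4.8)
\noindent\emph{Proof proposal.} The plan is to combine the only two structural inputs available: the fact that setwise convergence controls the measures on \emph{every} Borel set, and the countable stability of $dim$. Observe first that, taking $f=\mathbf{1}_A$ for $A\in\mathcal{B}$ (a bounded measurable function) in the definition of the setwise topology, $\nu_n\stackrel{s}{\rightarrow}\nu$ forces $\nu_n(A)\rightarrow\nu(A)$ for every $A\in\mathcal{B}$; this is the one analytic ingredient the argument needs. Write $L=\liminf_{n\rightarrow\infty}dim^U(\nu_n)$ and note $L\leq dim(X)<\infty$, since $X$ itself is always an admissible full-measure set in the infimum defining $dim^U$. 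I would then pass to a subsequence $\{\nu_{n_k}\}$ realizing the liminf, i.e.\ $dim^U(\nu_{n_k})\rightarrow L$, and for each $k$ unfold the infimum defining $dim^U(\nu_{n_k})$ to select $A_k\in\mathcal{B}$ with $\nu_{n_k}(X\setminus A_k)=0$ and $dim(A_k)\leq dim^U(\nu_{n_k})+1/k$, so that $dim(A_k)\rightarrow L$.

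The heart of the argument is to assemble from these sets a single $\nu$-full set whose dimension is arbitrarily close to $L$. For each $N$ put $B_N=\bigcup_{k\geq N}A_k$. Countable stability gives $dim(B_N)=\sup_{k\geq N}dim(A_k)$, and since the sequence $dim(A_k)$ converges to $L$ these tail suprema decrease to $L$ as $N\rightarrow\infty$. On the other hand $A_k\subseteq B_N$ whenever $k\geq N$, so $\nu_{n_k}(X\setminus B_N)\leq\nu_{n_k}(X\setminus A_k)=0$ for all $k\geq N$; letting $k\rightarrow\infty$ along the subsequence and invoking the indicator-function consequence of setwise convergence yields $\nu(X\setminus B_N)=0$. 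Hence each $B_N$ is admissible in the infimum defining $dim^U(\nu)$, whence $dim^U(\nu)\leq dim(B_N)=\sup_{k\geq N}dim(A_k)$; sending $N\rightarrow\infty$ produces $dim^U(\nu)\leq L$, which is exactly (\ref{eq2}).

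I expect the main obstacle to be the bookkeeping around the tail union rather than any deep difficulty. Taking the full union $\bigcup_k A_k$ would be fatal: its dimension equals the global supremum $\sup_k dim(A_k)$, which may strictly exceed $L$ because of finitely many early terms of large dimension. The role of $B_N$ is precisely to discard those early terms while retaining $\nu$-fullness, and the inclusion $A_k\subseteq B_N$ for $k\geq N$ is what guarantees both properties simultaneously; this is also why passing to the liminf-realizing subsequence at the outset is not a convenience but a necessity. It is equally essential that the hypothesis is setwise and not merely weak convergence, since the argument evaluates the measures $\nu_{n_k}$ on the fixed, generally very irregular Borel set $X\setminus B_N$, and only the setwise topology delivers $\nu_n(A)\rightarrow\nu(A)$ for such arbitrary $A$. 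Finally, I would double-check the harmless degenerate cases (for instance that the selection $A_k$ and the bound $dim(A_k)\leq dim^U(\nu_{n_k})+1/k$ are legitimate, which follows from finiteness of $dim^U(\nu_{n_k})\leq dim(X)$), but I anticipate these to be routine.
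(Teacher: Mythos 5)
Your proof is correct and follows essentially the same route as the paper's: choose near-optimal $\nu_{n_k}$-full sets $A_k$, take a countable union, apply countable stability to bound its dimension, and use setwise convergence on the fixed Borel complement to see the union is $\nu$-full. The only difference is cosmetic --- the paper argues by contradiction with the full union $\bigcup_k A_k$ (having first fixed a uniform bound $a+\varepsilon<dim^U(\nu)$ on the $dim(A_k)$), whereas your direct version with the tail unions $B_N$ handles the possible large early terms slightly more carefully; both arguments are sound.
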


Alternatively, we have  upper semi-continuity for the lower measure-dimension mapping $dim^L$, which shows the two measure-dimension mappings $dim^U$ and $dim^L$  are dual to each other in some sense. 

\begin{theorem}\label{thm4}
Let $dim$ be a dimensional mapping. The induced lower measure-dimension mapping $dim^L$ is upper semi-continuous under the setwise topology on $\mathcal{M}(X)$, that is, if $\nu_n\stackrel{s}{\rightarrow}\nu$ in $\mathcal{M}(X)$ as $n\rightarrow\infty$, then 
\begin{equation}
\limsup_{n\rightarrow\infty} dim^L (\nu_n)\leq dim^L (\nu).
\end{equation}
\end{theorem}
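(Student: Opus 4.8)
The plan is to exploit the infimum structure defining $dim^L$ together with the single crucial consequence of setwise convergence: that $\nu_n(A)\to\nu(A)$ for every fixed $A\in\mathcal{B}$. Worth noting at the outset is that, in contrast with Theorem \ref{thm1}, this argument needs no countable stability hypothesis on $dim$. The reason is that upper semi-continuity of $dim^L$ can be witnessed by a \emph{single} low-dimensional set of positive $\nu$-measure, and one only has to transport the positivity of that set's measure to the $\nu_n$; no union over countably many sets (and hence no control of $dim$ on such unions) is required.

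First I would fix an arbitrary $\epsilon>0$. By the definition $dim^L(\nu)=\inf\{dim(A):\nu(A)>0,\,A\in\mathcal{B}\}$ as an infimum, there exists a Borel set $A\in\mathcal{B}$ with $\nu(A)>0$ and $dim(A)<dim^L(\nu)+\epsilon$. This $A$ will serve as a fixed witness for every large $n$. The key step is then to propagate the strict positivity of measure from $\nu$ to the $\nu_n$: applying the defining property of setwise convergence $\nu_n\stackrel{s}{\rightarrow}\nu$ to the bounded measurable indicator $\mathbf{1}_A$ yields
\begin{equation*}
\nu_n(A)=\int_X \mathbf{1}_A\, d\nu_n\;\longrightarrow\;\int_X \mathbf{1}_A\, d\nu=\nu(A)>0,
\end{equation*}
so that $\nu_n(A)>0$ for all sufficiently large $n$, say $n\geq N$.

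For every such $n\geq N$ the set $A$ is admissible in the infimum defining $dim^L(\nu_n)$, and therefore $dim^L(\nu_n)\leq dim(A)<dim^L(\nu)+\epsilon$. Taking the $\limsup$ over $n$ gives $\limsup_{n\to\infty} dim^L(\nu_n)\leq dim(A)<dim^L(\nu)+\epsilon$, and since $\epsilon>0$ was arbitrary we conclude $\limsup_{n\to\infty} dim^L(\nu_n)\leq dim^L(\nu)$, as desired. Honestly, there is no genuine obstacle here; the entire content lies in the positivity-propagation step, which is immediate from testing setwise convergence against $\mathbf{1}_A$. The only point demanding any care is to insist that the witness $A$ be chosen \emph{before} passing to the limit in $n$, so that one and the same set can be reused uniformly for all large $n$ inside the infimum for $\nu_n$.
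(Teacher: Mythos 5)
Your proof is correct and follows essentially the same route as the paper's: pick a witness set $A$ with $\nu(A)>0$ and $dim(A)<dim^L(\nu)+\epsilon$, use setwise convergence to get $\nu_n(A)>0$ for large $n$, conclude $dim^L(\nu_n)\leq dim(A)$, and let $\epsilon\to 0$. Your additional remarks (testing against $\mathbf{1}_A$, and the observation that no countable stability is needed here) only make explicit what the paper leaves implicit.
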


As the finest topology in this work, the TV topology is induced from an explicit metric on  $\mathcal{M}(X)$.  

\begin{definition}
The \emph{TV (total-variation) metric}  is defined to be
\begin{center}
$\Vert \nu-\varrho\Vert_{TV}=\sup_{A\in\mathcal{B}}\{|\nu(A)-\varrho(A)|\}$
\end{center}   
for two measures $\nu, \varrho\in\mathcal{M}(X)$.
\end{definition}

Refer to \cite{Doo, FKZ1, HL, Las, PS}). Considering the two measure-dimension mappings $dim_C$ and $dim_{MC}$, they do not admit any semi-continuity under the finest topology on $\mathcal{M}(X)$ in general, see Theorem \ref{thm5}.

\section{Proofs of the semi-continuity results and examples on discontinuity of the measure-dimension mappings under various topology}\label{sec7}

First, to provide a general intuition on the continuity of the measure-dimension mappings in our consideration, we present the readers with the following result.
\begin{theorem}\label{thm18}
The measure-dimension mappings 
\begin{center}
$dim_B^L, \overline{dim}_B^L, \underline{dim}_B^L (\overline{dim}_{MB}^L, \underline{dim}_{MB}^L), dim_H^L, dim_P^L,$
\end{center}
\begin{center}
$dim_B^U, \overline{dim}_B^U, \underline{dim}_B^U (\overline{dim}_{MB}^U, \underline{dim}_{MB}^U), dim_H^U, dim_P^U$ 
\end{center}
and
\begin{center}
$ dim_C, dim_{MC}$
\end{center}
from $\mathcal{M}(X)$ to $[0,+\infty)$ are not continuous under the weak, setwise or TV topology in general.
\end{theorem}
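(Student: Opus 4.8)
The plan is to prove discontinuity by constructing, for each of the listed measure-dimension mappings, an explicit sequence $\nu_n$ converging to a limit $\nu$ under even the finest topology considered (TV), along which the dimension value fails to converge to the value at $\nu$. Since the TV topology is finer than the setwise topology, which is in turn finer than the weak topology, a single sequence witnessing TV-discontinuity simultaneously witnesses discontinuity under all three topologies: if $\nu_n \stackrel{TV}{\rightarrow}\nu$ then also $\nu_n\stackrel{s}{\rightarrow}\nu$ and $\nu_n\stackrel{w}{\rightarrow}\nu$. So the cleanest strategy is to build examples that converge in TV. I would work in a convenient ambient space, say $X=\mathbb{R}$ or $X=[0,1]$, where the dimensions of standard sets (finite sets, Cantor-type sets, intervals) are well understood.

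The key construction I would use is a convex-combination trick. Fix a target measure $\nu$ whose support has a known dimension, and a ``contaminating'' measure $\mu$ supported on a set of a genuinely different dimension. Set $\nu_n = (1-\epsilon_n)\nu + \epsilon_n \mu$ with $\epsilon_n \to 0$. Then $\Vert \nu_n - \nu\Vert_{TV} = \epsilon_n \Vert \mu - \nu\Vert_{TV} \to 0$, so $\nu_n\stackrel{TV}{\rightarrow}\nu$. The point is that $dim^U$ depends on the support (the smallest full-measure set) and $dim^L$ on the infimal dimension of a positive-measure set, so even a vanishingly small mass $\epsilon_n$ sitting on a set of the ``wrong'' dimension can pin the value of the measure-dimension mapping away from its value at $\nu$. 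For the upper mappings, I would take $\mu$ supported on a set of \emph{larger} dimension than $\mathrm{supp}(\nu)$ (e.g. $\nu$ a point mass, dimension $0$, and $\mu$ a measure on a set of dimension $1$), forcing $dim^U(\nu_n)$ to stay at the larger value while $dim^U(\nu)=0$. For the lower mappings the roles reverse: take $\mu$ on a set of \emph{smaller} dimension, so that the infimum defining $dim^L(\nu_n)$ is dragged down even though $dim^L(\nu)$ is large. For the correlation dimensions $dim_C$ and $dim_{MC}$, which are defined through integral/limit expressions rather than through support, I would instead exhibit a sequence (possibly again of the convex-combination type, but with carefully chosen $\epsilon_n$ and scales) for which the defining limit oscillates or jumps; here I can lean on the inequality $dim_H^L(\nu)\leq dim_C(\nu)\leq dim_{MC}(\nu)$ from the excerpt to control or compare values, and on the standard behavior of correlation integrals on self-similar measures.

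Because the statement asserts failure of continuity for a whole list of mappings, I would organize the proof as a small menu of constructions rather than one universal example, grouping the upper mappings together, the lower mappings together, and treating $dim_C,dim_{MC}$ separately. For the box-counting mappings one must be a little careful, since $\overline{dim}_B,\underline{dim}_B$ lack countable stability and behave differently from $dim_H,dim_P$ on countable sets; I would check that the chosen $\mu$ and $\nu$ have supports whose box dimensions are robust under the contamination, e.g. by taking both supports to be compact so that box dimension equals that of the closure of the full-measure set.

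I expect the main obstacle to be the correlation-dimension cases. Unlike the support-based mappings, $dim_C$ and $dim_{MC}$ are averaged quantities defined by a limit in $r$, so a small perturbation in TV need not move them, and conversely making them discontinuous requires engineering the $r\to 0$ asymptotics of $\int \nu_n(B(x,r))\,d\nu_n$ rather than just relocating a little mass. The delicate point is ensuring the relevant limit in $r$ genuinely exists for each $\nu_n$ and for $\nu$ (so that the values $dim_C$ are well-defined, as the remark in the excerpt presumes for the measures under consideration) while still jumping in the limit $n\to\infty$; I would resolve this by choosing $\nu$ and $\mu$ to be exact-dimensional self-similar measures on sets of different dimensions and computing the correlation integral of the mixture, where the cross terms are negligible at small scales and the value is governed by the lower-dimensional component.
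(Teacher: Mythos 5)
Your proposal is correct and is essentially the paper's own argument: the paper proves this theorem by citing its Examples \ref{exm6} and \ref{exm5}, which are exactly your convex-combination trick with a TV-convergent contamination (a small amount of Lebesgue measure added to a point mass to break the upper mappings, and a small atom added to Lebesgue measure to break the lower mappings and to pin the correlation integrals, hence $dim_C$ and $dim_{MC}$, at $0$). The only difference is that the paper handles the correlation dimensions with the same atomic perturbation rather than a self-similar mixture, since the atom alone bounds $\int\nu_n(B(x,r))\,d\nu_n$ below uniformly in $r$.
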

\begin{proof}
See for instance our Example \ref{exm6} and \ref{exm5}.
\end{proof}

\begin{rem}
It would be interesting to ask whether some measure-dimension mappings can be defined to be equipped with continuity under some topology, possibly at the cost of losing some known properties of some popular measure-dimension mappings.
\end{rem}

We first give an example to show that for a weakly convergent sequence $\nu_n\stackrel{w}{\rightarrow}\nu$ as $n\rightarrow\infty$, any of the sequences 
\begin{equation}\label{eq3}
\begin{array}{cc}
\{dim_B^L(\nu_n)\}_{n=1}^\infty, \{\overline{dim}_B^L(\nu_n)\}_{n=1}^\infty, \{\underline{dim}_B^L(\nu_n)\}_{n=1}^\infty, \{\overline{dim}_{MB}^L(\nu_n)\}_{n=1}^\infty,\\ 
\{\underline{dim}_{MB}^L(\nu_n)\}_{n=1}^\infty, \{dim_H^L(\nu_n)\}_{n=1}^\infty, \{dim_P^L(\nu_n)\}_{n=1}^\infty
\end{array}
\end{equation}
\begin{equation}\label{eq4}
\begin{array}{cc}
\{dim_B^U(\nu_n)\}_{n=1}^\infty, \{\overline{dim}_B^U(\nu_n)\}_{n=1}^\infty, \{\underline{dim}_B^U(\nu_n)\}_{n=1}^\infty, \{\overline{dim}_{MB}^U(\nu_n)\}_{n=1}^\infty,\\ 
\{\underline{dim}_{MB}^U(\nu_n)\}_{n=1}^\infty, \{dim_H^U(\nu_n)\}_{n=1}^\infty, \{dim_P^U(\nu_n)\}_{n=1}^\infty
\end{array}
\end{equation}
and
\begin{equation}\label{eq5}
\{dim_C(\nu_n)\}_{n=1}^\infty, \{dim_{MC}(\nu_n)\}_{n=1}^\infty
\end{equation}
may not converge. In this and all the examples below, we set $X\subset \mathbb{R}^d$ endowed with the Euclidean metric $\rho_u$ for some positive integer $d$. Let $\mathfrak{L}^d$ be the $d$-dimensional Lebesgue measure on $\mathbb{R}^d$.

\begin{exm}\label{exm1} 
Define a sequence of probability measures $\{\nu_n\}_{n\in\mathbb{N}}$ on $[0,1]$ to be 
\begin{center}
$ \nu_n=\left\{
\begin{array}{ll}
n\mathfrak{L}^1|_{[0,\frac{1}{n}]} & \mbox{ if } n \mbox{ is } odd,\\
\delta_{\frac{1}{n}} & \mbox{ if } n \mbox{ is } even.\\
\end{array}
\right.$ 
\end{center} 
in which $\delta_{\frac{1}{n}}$ is the Dirac measure at the point $\frac{1}{n}$. Let $\nu=\delta_{0}$ be the Dirac measure at $0$.
\end{exm} 

Obviously we have $\nu_n\stackrel{w}{\rightarrow}\nu$ as $n\rightarrow\infty$ (but not setwisely). By  \cite[Proposition 2.1]{You} and \cite[Lemma 2.8]{MMR}, direct computations give that 
\begin{center}
$
\begin{array}{ll}
& dim_B(\nu_n)=dim_{MB}(\nu_n)=dim_H(\nu_n)=dim_P(\nu_n)=dim_C(\nu_n)=dim_{MC}(\nu_n)\\
=& \left\{
\begin{array}{ll}
1 & \mbox{ if } n \mbox{ is } odd,\\
0 & \mbox{ if } n \mbox{ is } even.\\
\end{array}
\right.
\end{array}
$ 
\end{center}
So any of the sequences in (\ref{eq3})(\ref{eq4})(\ref{eq5}) does not converge in Example \ref{exm1}.

We then give an example to show that in case of $\nu_n\stackrel{w}{\rightarrow}\nu$, even if all the sequences 
\begin{center}
$\{dim_B(\nu_n)\}_{n=1}^\infty, \{dim_{MB}(\nu_n)\}_{n=1}^\infty, \{dim_H(\nu_n)\}_{n=1}^\infty$, $\{dim_P(\nu_n)\}_{n=1}^\infty, \{dim_C(\nu_n)\}_{n=1}^\infty, \{dim_{MC}(\nu_n)\}_{n=1}^\infty$
\end{center}
converge, their limits may not equal 
\begin{center}
$dim_B(\nu), dim_{MB}(\nu), dim_H(\nu), dim_P(\nu),  dim_C(\nu)$ and $dim_{MC}(\nu)$
\end{center}
respectively. The example is  borrowed from \cite[Example 2.2]{Bil1} essentially.

\begin{exm}[Billingsley]\label{exm4}
Define a sequence of probability measures $\nu_n$ on $[0,1]$ to be  
\begin{center}
$ \nu_n=\frac{1}{n}\Sigma_{i=1}^n\delta_{\frac{i}{n}}$ 
\end{center}
for $n\in\mathbb{N}$.
\end{exm} 
Obviously $ \nu_n\stackrel{w}{\rightarrow}\mathfrak{L}^1|_{[0,1]}$ as $n\rightarrow\infty$ (but not setwisely). However, we have
\begin{center}
$\lim_{n\rightarrow\infty}dim_B(\nu_n)=\lim_{n\rightarrow\infty} dim_{MB}(\nu_n)=\lim_{n\rightarrow\infty}dim_H(\nu_n)=\lim_{n\rightarrow\infty}dim_P(\nu_n)=\lim_{n\rightarrow\infty} dim_C(\nu_n)=\lim_{n\rightarrow\infty} dim_{MC}(\nu_n)=0$,
\end{center}
while
\begin{center}
$dim_B(\mathfrak{L}^1|_{[0,1]})=dim_{MB}(\mathfrak{L}^1|_{[0,1]})=dim_H(\mathfrak{L}^1|_{[0,1]})=dim_P(\mathfrak{L}^1|_{[0,1]})=dim_C(\mathfrak{L}^1|_{[0,1]})=dim_{MC}(\mathfrak{L}^1|_{[0,1]})=1$.
\end{center}

One might think that things will be better if the atomic measures are excluded, however, the following example shows that the discontinuity still exists among sequences of non-atomic measures.

\begin{exm}\label{exm3}
Suppose $k\geq 2$ is an integer. Let
\begin{center}
$S=\{s_i: [0,1]\rightarrow[0,1]\}_{i=1}^k$ 
\end{center}
be an affine IFS with the contraction ratio $0<|s_i|:=|s_i'(x)|<1$ being a fixed number for any $1\leq i\leq k$ on $X=[0,1]$. We require it satisfies the \emph{strong separation condition}:
\begin{center}
$s_i([0,1])\cap s_j([0,1])=\emptyset$
\end{center}
for any $1\leq i\neq j\leq k$. We do not specify the terminals of the the intervals $\{s_i([0,1])\}_{i=1}^k$ as they have no effect on the properties we would like to demonstrate through the example. Let $0<h<1$ be the unique solution of the \emph{Bowen equation}
\begin{center}
$\sum_{i=1}^k|s_i|^h=1$.
\end{center}
There is an unique ergodic measure (with respect to the push-forward of the shift map under the projection) $\nu$ on the attractor $J$ in this case. 

Let 
\begin{center}
$\mathbb{N}_k:=\{1,2,\cdots,k\}$ 
\end{center}
 be the $k$-truncation of $\mathbb{N}$. For every fixed $n\in\mathbb{N}$, let
\begin{center}
$\mathbb{N}_k^n=\{\omega: \omega=\omega_1\omega_2\cdots\omega_n, \omega_i\in\mathbb{N}_k \mbox{\ for any } 1\leq i\leq n\}$
\end{center}     
be the collection of length-$n$ concatenation-words of $\mathbb{N}_k$. Let 
\begin{center}
$X_\omega=s_\omega([0,1])=s_{\omega_1}\circ s_{\omega_2}\cdots \circ s_{\omega_n}([0,1])$
\end{center}
for any $\omega\in\mathbb{N}_k^n$ and any $n\in\mathbb{N}$. Define a sequence of probability measures $\{\nu_n\}_{n\in\mathbb{N}}$ to be
\begin{center}
$\nu_n=\sum_{\omega\in\mathbb{N}_k^n}|s_\omega|^h\mathfrak{L}^1|_{X_\omega}=\sum_{\omega\in\mathbb{N}_k^n}(|s_{\omega_1}||s_{\omega_2}|\cdots|s_{\omega_n}|)^h\mathfrak{L}^1|_{X_\omega}$
\end{center} 
on $[0,1]$ for any $n\in\mathbb{N}$. It is supported on $\cup_{\omega\in\mathbb{N}_k^n} X_\omega$ for each $n\in\mathbb{N}$ obviously. In fact $\nu_n$ is the $n$-th canonical mass distribution of the linear cookie-cutter map in each cutting step (see for example, the \emph{self-similar} measures \cite{MR}). 
\end{exm}

Note that none of the measures above is atomic. It is easy to show that for any open set $L\subset [0,1]$, we have $ \nu_n(L)\geq \nu(L)$. So by  \cite[Theorem 2.1]{Bil1}, we have 
\begin{center}
$\nu_n\stackrel{w}{\rightarrow}\nu$ 
\end{center}
as $n\rightarrow\infty$ (but not setwisely). One can show that (left to the readers)  
\begin{center}
$\lim_{n\rightarrow\infty}dim_B(\nu_n)=\lim_{n\rightarrow\infty} dim_{MB}(\nu_n)=\lim_{n\rightarrow\infty}dim_H(\nu_n)=\lim_{n\rightarrow\infty}dim_P(\nu_n)=\lim_{n\rightarrow\infty} dim_C(\nu_n)=\lim_{n\rightarrow\infty} dim_{MC}(\nu_n)=1$,
\end{center}
while
\begin{center}
$dim_B(\nu)=dim_{MB}(\nu)=dim_H(\nu)=dim_P(\nu)=dim_C(\nu)=dim_{MC}(\nu)=h$.
\end{center}

One might also hope that escalating the strength of convergence of measures may save the continuity, but we will give an example to show that  convergence of dimensions is not true even for TV convergent sequences of measures (see also Example \ref{exm5}).

\begin{exm}\label{exm6}
Define a sequence of probability measures $\nu_n$ on $[0,2]$ to be 
\begin{center}
$ \nu_n=\frac{n-1}{n} \delta_{0}+\frac{1}{n} \mathfrak{L}^1|_{[1,2]}.$ 
\end{center} 
\end{exm}
It is easy to see that $\| \nu_n-\delta_{0} \|_{TV}=\frac{1}{n}\rightarrow 0$ as $n\rightarrow\infty$, so $ \nu_n\stackrel{TV}{\rightarrow}\delta_{0}$. However,

\begin{center}
$
\begin{array}{ll}
&\lim_{n\rightarrow\infty} dim_B^U(\nu_n)=\lim_{n\rightarrow\infty} dim_{MB}^U(\nu_n)=\lim_{n\rightarrow\infty} dim_H^U(\nu_n)=\lim_{n\rightarrow\infty}dim_P^U(\nu_n)=1\\
>& 0=dim_B(\delta_{0})=dim_{MB}(\delta_{0})=dim_H(\delta_{0})=dim_P(\delta_{0}).
\end{array}
$
\end{center}

In fact, continuity of these measure-dimension mappings has no chance to be true if there is no dimensional restrictions on the sequences in (\ref{eq3})(\ref{eq4})(\ref{eq5}). Luckily we do have some semi-continuity property for some measure-dimension mappings under the setwise topology. Now we prove Theorem \ref{thm1}.\\

Proof of Theorem \ref{thm1}:

\begin{proof}
Assume that (\ref{eq2}) does not hold, that is,  $\liminf_{n\rightarrow\infty} dim^U(\nu_n)< dim^U(\nu)$. Then we can find a sequence of positive integers $\{n_k\}_{k=1}^\infty$ and a real number $a<dim^U(\nu)$, such that
\begin{center}
$\lim_{k\rightarrow\infty} dim^U(\nu_{n_k})=a$.
\end{center} 
So there exists a sequence of measurable sets $\{A_k\}_{k=1}^\infty$, such that 
\begin{center}
$\nu_{n_k}(X\setminus A_k)=0$ and $dim(A_k)<a+\varepsilon< dim^U(\nu)$
\end{center}
for some small $\varepsilon>0$. Now let $A=\cup_{k=1}^\infty A_k$. Since $ \nu_n\stackrel{s}{\rightarrow}\nu$, we have
\begin{center}
$\nu(X\setminus A)=\lim_{k\rightarrow\infty}\nu_{n_k}(X\setminus A)=0.$
\end{center}
However, as $dim$ satisfies the countable stability property, then
\begin{center}
$dim(A)=\sup_k\{dim(A_k)\}<a+\varepsilon<dim^U(\nu)$. 
\end{center}
This contradicts the definition of $dim^U(\nu)$, which justifies our theorem.
\end{proof}

\begin{rem}\label{rem1}
Theorem \ref{thm1} does not hold for weakly convergent sequences $\nu_n\stackrel{w}{\rightarrow}\nu$ as $n\rightarrow\infty$ in $\mathcal{M}(X)$, as one can see from our Example \ref{exm4}. Together with Example \ref{exm3}, one can see that both lower semi-continuity and upper semi-continuity are not true for general measure-dimension mappings under the weak topology. 
\end{rem}

Theorem \ref{thm1} has some interesting extensions in some special cases.

\begin{corollary}\label{cor1}
Let $dim$ be a dimensional mapping satisfying the countable stability property. Let $dim^U$ be the induced upper measure-dimension mapping. For $ \nu_n\stackrel{s}{\rightarrow}\nu$ as $n\rightarrow\infty$, if
\begin{center}
$\esssup\{dim^U(\nu_n)\}_{n=1}^\infty\leq dim^U(\nu)$,
\end{center}
 then 
\begin{center}
$\lim_{n\rightarrow\infty} dim^U(\nu_n)=dim^U(\nu)$.
\end{center}
\end{corollary}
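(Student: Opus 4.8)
The plan is to combine the lower semi-continuity established in Theorem \ref{thm1} with the hypothesized upper bound to squeeze the limit. By Theorem \ref{thm1}, since $\nu_n \stackrel{s}{\rightarrow}\nu$ and $dim$ satisfies countable stability, we already have
\begin{equation*}
\liminf_{n\rightarrow\infty} dim^U(\nu_n) \geq dim^U(\nu).
\end{equation*}
The task therefore reduces to producing a matching upper bound on the $\limsup$, namely $\limsup_{n\rightarrow\infty} dim^U(\nu_n)\leq dim^U(\nu)$; once both inequalities are in hand, the $\liminf$ and $\limsup$ coincide with $dim^U(\nu)$, the limit exists, and equals $dim^U(\nu)$, which is exactly the conclusion.

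First I would unwind the meaning of the hypothesis $\esssup\{dim^U(\nu_n)\}_{n=1}^\infty \leq dim^U(\nu)$. The essential supremum here is taken over the index sequence $\{dim^U(\nu_n)\}_{n=1}^\infty$ viewed as a sequence of real numbers, so it measures the supremum of the values after discarding a negligible set of indices. Concretely, this says that for all but negligibly many $n$ we have $dim^U(\nu_n)\leq dim^U(\nu)$; hence only finitely many (or a measure-zero collection of) terms can exceed $dim^U(\nu)$, and these exceptional terms do not affect the $\limsup$ of the sequence. From this I would conclude directly that
\begin{equation*}
\limsup_{n\rightarrow\infty} dim^U(\nu_n) \leq \esssup\{dim^U(\nu_n)\}_{n=1}^\infty \leq dim^U(\nu),
\end{equation*}
using the standard fact that the $\limsup$ of a real sequence is bounded above by its essential supremum.

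Combining the two displays gives
\begin{equation*}
dim^U(\nu) \leq \liminf_{n\rightarrow\infty} dim^U(\nu_n) \leq \limsup_{n\rightarrow\infty} dim^U(\nu_n) \leq dim^U(\nu),
\end{equation*}
so all four quantities are equal and $\lim_{n\rightarrow\infty} dim^U(\nu_n)=dim^U(\nu)$, as claimed. The only genuinely delicate point, and the step I would spell out most carefully, is the precise interpretation of $\esssup$ over the integer-indexed sequence and why controlling it suffices to control the ordinary $\limsup$; everything else is a direct invocation of Theorem \ref{thm1} and an elementary squeeze. I would make sure the reader understands that the hypothesis is exactly the extra ingredient needed to upgrade one-sided semi-continuity into full convergence.
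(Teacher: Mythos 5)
Your argument is exactly the route the paper intends: the paper's proof is simply ``this follows instantly from Theorem \ref{thm1},'' and you have correctly filled in the squeeze --- the theorem gives $\liminf_{n\to\infty} dim^U(\nu_n)\geq dim^U(\nu)$, the hypothesis gives $\limsup_{n\to\infty} dim^U(\nu_n)\leq \esssup\{dim^U(\nu_n)\}_{n=1}^\infty\leq dim^U(\nu)$, and the two bounds force convergence. Your care over interpreting $\esssup$ on an integer-indexed sequence is reasonable (under any sensible reading it dominates the $\limsup$), so the proof is complete and matches the paper's.
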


\begin{proof}
This follows instantly from Theorem \ref{thm1}.
\end{proof}

\begin{corollary}\label{cor4}
The measure-dimension mappings $dim_{MB}^U, dim_H^U, dim_P^U$ are all lower semi-continuous under the setwise topology on $\mathcal{M}(X)$.
\end{corollary}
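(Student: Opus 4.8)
The plan is to obtain this as an immediate consequence of Theorem \ref{thm1}. That theorem yields lower semi-continuity of the induced upper measure-dimension mapping $dim^U$ for \emph{every} dimensional mapping $dim$ enjoying the countable stability property, so the entire task reduces to verifying that each of the three underlying dimensional mappings $dim_{MB}$, $dim_H$ and $dim_P$ is countably stable on $\mathcal{B}$.

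First I would invoke the classical countable stability of Hausdorff and packing dimension: for any countable family $\{A_i\}_{i=1}^\infty\subset\mathcal{B}$ one has $dim_H(\cup_{i=1}^\infty A_i)=\sup_i dim_H(A_i)$ and $dim_P(\cup_{i=1}^\infty A_i)=\sup_i dim_P(A_i)$, both standard facts (see \cite[Chapters 2--3]{Fal1}) that follow from monotonicity of the associated ($s$-dimensional Hausdorff, resp.\ packing) measures together with the observation that a countable union of positive $s$-measure forces at least one summand to carry positive $s$-measure. For the modified box-counting dimensions the countable stability is built into the definition, since $\overline{dim}_{MB}(A)$ and $\underline{dim}_{MB}(A)$ are obtained from the ordinary box dimensions precisely by an infimum over countable covers designed to restore countable stability (see \cite[Chapter 3]{Fal1}); this holds for both $\overline{dim}_{MB}$ and $\underline{dim}_{MB}$, hence for $dim_{MB}$ wherever the two agree.

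With the hypothesis of Theorem \ref{thm1} confirmed in all three cases, the theorem applies verbatim and delivers the asserted lower semi-continuity of $dim_{MB}^U$, $dim_H^U$ and $dim_P^U$ under the setwise topology. I anticipate no genuine obstacle; the single point deserving emphasis is the appearance of the \emph{modified} rather than the plain box-counting dimension. The ordinary box dimensions $\overline{dim}_B$ and $\underline{dim}_B$ are only finitely stable — a countable dense subset of $[0,1]$ already has box dimension $1$ — so $dim_B^U$ lies outside the reach of Theorem \ref{thm1} and is correctly omitted from the statement.
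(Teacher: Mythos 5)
Your proposal is correct and follows exactly the paper's route: the corollary is obtained by applying Theorem \ref{thm1} after noting that $dim_{MB}$, $dim_H$ and $dim_P$ are countably stable (citing \cite{Fal1}), which is all the paper's one-line proof does. Your additional remark that $dim_B$ fails countable stability and so $dim_B^U$ is rightly excluded matches the paper's discussion immediately following the corollary (and its Example \ref{exm8}).
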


\begin{proof}
This is because they are respectively induced from dimensional mappings 
\begin{center}
$dim_{MB}, dim_H, dim_P$ 
\end{center}
with countable stability property, refer to \cite{Fal1}.
\end{proof}

As to $dim_B^U$ ($\overline{dim}_B^U, \underline{dim}_B^U$), since the dimensional mapping $dim_B$ ($\overline{dim}_B, \underline{dim}_B$) inducing it does not satisfy countable stability, Theorem \ref{thm1} does not apply to it. The following example shows that it is not lower semi-continuous under the setwise topology.

\begin{exm}\label{exm8}
Let $X=\{\frac{1}{i}\}_{i=1}^\infty$. Consider the sequence of measures 
\begin{center}
$ \nu_n=\sum_{i=1}^n\frac{1}{i^2} \delta_{\frac{1}{i}}$ 
\end{center} 
for $n\in\mathbb{N}$ on $X$. Let 
\begin{center}
$ \nu=\sum_{i=1}^\infty\frac{1}{i^2} \delta_{\frac{1}{i}}$.
\end{center} 
\end{exm}

Direct computations (left to the readers) show that $\nu_n\stackrel{TV}{\rightarrow}\nu$ while
\begin{center}
$\lim_{n\rightarrow\infty} dim_B^U(\nu_n)=0<\frac{1}{2}=dim_B^U(\nu)$
\end{center}
in Example \ref{exm8}. These measures can be normalized to be probabilities.

Note that the proof of Theorem \ref{thm1} can not be applied to the measure-dimension mapping $dim^L$, because if we choose a sequence of sets $\{A_k\}_{k=1}^\infty$, such that 
\begin{center}
$\nu_{n_k}(A_k)>0$ and $dim^L(A_k)<a+\varepsilon< dim^L(\nu)$
\end{center}
we can not guarantee $\nu(A)=\lim_{k\rightarrow\infty}\nu_{n_k}(A)>0.$ In fact one can see from the following example that lower semi-continuity of typical measure-dimension mapping $dim^L$ is usually not true under setwise topology on $\mathcal{M}(X)$.

\begin{exm}\label{exm5}
Define a sequence of probability measures $\{\nu_n\}_{n\in\mathbb{N}}$ on $[0,1]$ to be 
\begin{center}
$ \nu_n=\frac{1}{n} \delta_{0}+\mathfrak{L}^1|_{[\frac{1}{n},1]}$ 
\end{center} 
for $n\in\mathbb{N}$.
\end{exm}
It is easy to see that $\| \nu_n-\mathfrak{L}^1|_{[0,1]} \|_{TV}=\frac{1}{n}\rightarrow 0$ as $n\rightarrow\infty$, so $ \nu_n\stackrel{TV}{\rightarrow}\delta_{0}$. However,

\begin{center}
$\lim_{n\rightarrow\infty}dim_B^L(\nu_n)=\lim_{n\rightarrow\infty}dim_{MB}^L(\nu_n)=\lim_{n\rightarrow\infty}dim_H^L(\nu_n)=\lim_{n\rightarrow\infty}dim_P^L(\nu_n)=0<1=dim_B(\mathfrak{L}^1|_{[0,1]})=dim_{MB}(\mathfrak{L}^1|_{[0,1]})=dim_H(\mathfrak{L}^1|_{[0,1]})=dim_P( \mathfrak{L}^1|_{[0,1]})$.
\end{center}

Now we prove the semi-continuity result for the measure-dimension mapping $dim^L$.\\

Proof of Theorem \ref{thm4}:

\begin{proof}
Accoriding to the definition of  $dim^L(\nu)$, for any small $\varepsilon>0$, we can find a measurable set $A$  with $\nu(A)>0$ and  $dim(A)\leq dim^L(\nu)+\varepsilon$. Since $\nu_n\stackrel{s}{\rightarrow}\nu$  as $n\rightarrow\infty$, we can guarantee that 
\begin{center}
$\nu_n(A)>0$
\end{center}
for any $n$ large enough. This implies that 
\begin{center}
$\limsup_{n\rightarrow\infty} dim^L(\nu_n)\leq dim^L(\nu)+\varepsilon$.
\end{center}
The proof is finished by letting $\epsilon\rightarrow 0$.

\end{proof}

The following result is instant in virtue of Theorem \ref{thm4}.

\begin{corollary}\label{cor3}
The measure-dimension mappings $dim_B^L (\overline{dim}_B^L, \underline{dim}_B^L), dim_{MB}^L, dim_H^L, dim_P^L$ are all upper semi-continuous under the setwise topology on $\mathcal{M}(X)$.
\end{corollary}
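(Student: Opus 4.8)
The plan is to recognize that Corollary \ref{cor3} is merely the specialization of Theorem \ref{thm4} to six concrete choices of the dimensional mapping $dim$, so the entire argument is an instantiation rather than a fresh proof. First I would recall that Theorem \ref{thm4} is stated for an \emph{arbitrary} dimensional mapping: its hypotheses impose no structural condition on $dim$ beyond its being a non-negative real-valued function on the Borel $\sigma$-algebra $\mathcal{B}$. In particular, and in sharp contrast with Theorem \ref{thm1}, no countable stability is required. This is the decisive point, since it is precisely what allows the box-counting families to be admissible here even though their upper analogues are excluded from Corollary \ref{cor4}.

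Next I would verify that each of
\begin{center}
$\overline{dim}_B,\ \underline{dim}_B,\ dim_{MB},\ dim_H,\ dim_P$
\end{center}
(together with $dim_B$ on the set of measures where the upper and lower box-counting dimensions coincide) is genuinely a dimensional mapping in the sense of Section \ref{sec2}, i.e.\ a map $\mathcal{B}\rightarrow\mathbb{R}^+$. This is entirely classical and requires citing only the standard definitions in Falconer \cite{Fal1}; no real computation is involved.

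Finally, applying Theorem \ref{thm4} with $dim$ taken in turn to be each of these five (resp.\ six) dimensional mappings yields immediately that the corresponding induced lower measure-dimension mapping $dim^L$ is upper semi-continuous under the setwise topology on $\mathcal{M}(X)$, which is exactly the assertion of the corollary. Because the deduction is a pure specialization, there is no substantive obstacle to overcome; the only subtlety worth flagging explicitly is the structural asymmetry already emphasized above — namely that, unlike the lower-semicontinuity statement of Theorem \ref{thm1}, Theorem \ref{thm4} never invokes countable stability, so the box-counting lower dimensions $dim_B^L, \overline{dim}_B^L, \underline{dim}_B^L$ are covered here despite their upper counterparts failing to be lower semi-continuous, as demonstrated in Example \ref{exm8}.
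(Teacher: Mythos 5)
Your proposal is correct and matches the paper's argument exactly: the paper states that Corollary \ref{cor3} is ``instant in virtue of Theorem \ref{thm4},'' which is precisely your specialization of that theorem to each of the listed dimensional mappings. Your additional remark that Theorem \ref{thm4} imposes no countable stability hypothesis — which is why the box-counting lower dimensions are admissible here while their upper counterparts are excluded from Corollary \ref{cor4} — is a correct and worthwhile observation consistent with the paper's discussion around Example \ref{exm8}.
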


Considering Corollary \ref{cor4} and \ref{cor3}, it would be interesting to ask whether  the two measure-dimension mappings $dim_C$ or $dim_{MC}$ admits any semi-continuity. It turns out that they  do not have any semi-continuity under the TV topology in general. 

\begin{theorem}\label{thm5}
The two measure-dimension mappings $dim_C$ and $dim_{MC}$ from $\mathcal{M}(X)$ to $[0,+\infty)$  are neither upper semi-continuous nor lower semi-continuous under the TV topology.  
\end{theorem}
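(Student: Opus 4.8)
The plan is to refute each semi-continuity property by exhibiting explicit sequences of probability measures converging in the TV metric, in the concrete spirit of Examples \ref{exm6} and \ref{exm5}. The organising principle is that the correlation integral $C_\nu(r):=\int\nu(B(x,r))\,d\nu$ is, as $r\to 0$, extremely sensitive to points of high concentration, whereas $\|\cdot\|_{TV}$ is insensitive to the \emph{scale} at which mass sits; since $dim_C(\nu)=\lim_{r\to 0}\frac{\log C_\nu(r)}{\log r}$, any summand of $C_\nu(r)$ that decays slowly in $r$ drags the correlation dimension down, while TV-closeness controls only the amount of mass moved, not its local geometry. I would exploit this mismatch once in each direction, and handle $dim_C$ and $dim_{MC}$ together wherever possible.

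For the failure of lower semi-continuity I would use an atomic perturbation of Lebesgue measure, taking $\nu_n=(1-\tfrac1n)\mathfrak{L}^1|_{[0,1]}+\tfrac1n\delta_0$ and $\nu=\mathfrak{L}^1|_{[0,1]}$, so that $\|\nu_n-\nu\|_{TV}=\tfrac1n\to 0$ and hence $\nu_n\stackrel{TV}{\to}\nu$. The atom contributes the constant summand $(1/n)^2$ to $C_{\nu_n}(r)$ for every $r<1$, so $C_{\nu_n}(r)\not\to 0$ and $dim_C(\nu_n)=0$; the same atom, carrying the \emph{fixed} mass $1/n$, cannot be discarded in the inner supremum defining $dim_{MC}$ once $\delta<1/n$, whence $dim_{MC}(\nu_n)=0$ as well. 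Since $dim_C(\nu)=dim_{MC}(\nu)=1$, this gives $\liminf_n dim_C(\nu_n)=\liminf_n dim_{MC}(\nu_n)=0<1$, refuting lower semi-continuity for both mappings at once.

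For the failure of upper semi-continuity of $dim_C$ I would avoid atoms, which persist under TV convergence and so cannot raise the dimension of the approximants, and instead use a \emph{non-atomic} point-concentration. Let $\nu$ have density $f(x)=c\,x^{-1}(\log\tfrac1x)^{-2}$ on $(0,1]$, normalised to a probability. A direct computation gives $\nu((0,\eta))=c/\log\tfrac1\eta\to 0$, and, separating the near-origin part from the bulk, $C_\nu(r)\asymp(\log\tfrac1r)^{-2}$, so that $dim_C(\nu)=0$. The truncations $\nu_n:=\nu(\,\cdot\mid[\eta_n,1])$ with $\eta_n\to 0$ have continuous density bounded away from $0$ and $\infty$, hence $dim_C(\nu_n)=1$, while $\|\nu_n-\nu\|_{TV}=2\,\nu((0,\eta_n))\to 0$. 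Thus $\limsup_n dim_C(\nu_n)=1>0=dim_C(\nu)$, refuting upper semi-continuity of $dim_C$.

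The hard part will be the failure of upper semi-continuity for $dim_{MC}$, and I expect this to be the main obstacle. The concentration trick above does not transfer: the offending mass near the origin has vanishing \emph{local} mass and can therefore be excised inside the defining supremum, so one checks $dim_{MC}(\nu)=1$ for that measure and no gap survives. This is precisely the robustness that Pesin's modification was built to provide — any concentration a small TV-perturbation could disperse in the approximants can equally be removed in the supremum defining $dim_{MC}(\nu)$, while low-dimensional structure of positive mass is, by TV-closeness, shared by $\nu$ and the $\nu_n$. The only viable route I see is to force a \emph{discontinuity of the variational problem itself}: engineer $\nu$ whose value $dim_{MC}(\nu)$ is held down because the high-dimensional sets $A$ sit just below the mass threshold $\nu(A)\ge 1-\delta$, and arrange the TV-perturbation $\nu_n$ so that such a set crosses the threshold and becomes admissible, making $\sup_{\nu_n(A)\ge1-\delta}D(A)$ jump up. Carrying this out requires simultaneous control of the two nested limits — the inner $r\to 0$ with the \emph{full} ball-mass $\nu(B(x,r))$ and the outer $\delta\to 0$ — and it is this bookkeeping, rather than any single estimate, where the genuine difficulty lies.
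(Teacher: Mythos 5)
Your proposal establishes three of the four assertions in the theorem but not the fourth, so as it stands it does not prove the statement. The lower semi-continuity part is correct and is essentially the paper's own route: your $\nu_n=(1-\tfrac1n)\mathfrak{L}^1|_{[0,1]}+\tfrac1n\delta_0$ plays exactly the role of Example~\ref{exm5}, and the observation that a fixed atom forces $dim_C(\nu_n)=dim_{MC}(\nu_n)=0$ while the limit has dimension $1$ is sound. Your counterexample to upper semi-continuity of $dim_C$ is a genuinely different construction from the paper's: the density $c\,x^{-1}(\log\tfrac1x)^{-2}$ is not integrable at $x=1$, so the measure must be placed on $(0,e^{-1}]$ or similar, but with that repair the estimate $C_\nu(r)\asymp(\log\tfrac1r)^{-2}$, hence $dim_C(\nu)=0$, and the TV convergence of the truncations all go through. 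The paper instead uses Example~\ref{exm7}, the Mattila--Mor\'an--Rey measure giving mass $(1-a)a^{i}$ to the interval with endpoints $a^{(i+1)^2}$ and $a^{i^2}$, and gets $dim_C(\nu)=0$ from Lemma~\ref{lem6}; your construction buys nothing extra here but is a valid, more elementary alternative for the $dim_C$ half.

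The genuine gap is the failure of upper semi-continuity of $dim_{MC}$: you correctly identify it as the hard case and then stop at a strategy sketch (``force a discontinuity of the variational problem''), without producing a measure $\nu$ and TV-approximants $\nu_n$ with $\limsup_n dim_{MC}(\nu_n)>dim_{MC}(\nu)$. That is one quarter of the theorem and is precisely the part the paper must work for: it reuses Example~\ref{exm7} and asserts, via Lemma~\ref{lem6} and Proposition~\ref{pro1}, that the limit measure there has $dim_{MC}(\nu)=0$ as well, the intended point being that the concentration is present at every mass threshold, since for $r=a^{N^2}$ the set $Y_r=(0,a^{N^2}]$ still satisfies $\nu(B(x,r))\nu(Y_r)\ge a^{2N}=r^{2/N}$. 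Your own objection --- that any concentration a TV-perturbation can disperse can equally be excised inside the supremum defining $dim_{MC}$ --- is exactly the point that must be confronted for that example too: the tail $(0,a^{N^2}]$ has measure only $a^{N}$, so once $a^{N_0}\le\delta$ the set $A=[a^{N_0^2},1]$ is admissible, $\nu$ has density bounded above and below on $A$, and the inner limit $\lim_{r\to0}\log\big(\int_A\nu(B(x,r))\,d\nu\big)/\log r$ then equals $1$ rather than $0$; note that the proof of Lemma~\ref{lem6} only bounds the unrestricted integral and does not address the restricted one. So the difficulty you flagged is real, it is where the whole weight of the $dim_{MC}$ claim rests, and neither your sketch nor your $dim_C$ construction resolves it; until a concrete example with verified $dim_{MC}(\nu)<\limsup_n dim_{MC}(\nu_n)$ is supplied, your write-up leaves that half of the theorem unproven.
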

\begin{proof}
Simple calculations show that, in Example \ref{exm5},
\begin{center}
$dim_C(\nu_n)=dim_{MC}(\nu_n)=\lim_{r\rightarrow 0}\cfrac{\log \int\nu_n(B(x,r))d\nu_n}{\log r}=\cfrac{\log\big((\frac{1}{n})^2+2r\frac{n-1}{n}\big)}{\log r}=0$,
\end{center} 
while
\begin{center}
$dim_C(\mathfrak{L}^1|_{[0,1]})=dim_{MC}(\mathfrak{L}^1|_{[0,1]})=1$.
\end{center}
Thus Example \ref{exm5} justifies the fact that the two measure-dimension mappings $dim_C$ and $dim_{MC}$ can not be lower semi-continuous under the TV topology. For an example violating the upper semi-continuity of $dim_C$ and $dim_{MC}$ under the TV topology, see Example \ref{exm7}.
\end{proof}

However, in virtue of \cite[Theorem 2.6]{MMR}, if some restrictions are set on the sequence of the measures $\{\nu_n\}_{n=1}^\infty$, we have some partial semi-continuity results.
\begin{corollary}
For a sequence of  measures $\{\nu_n\in\mathcal{M}(X)\}_{n=1}^\infty$, if  $\nu_n$ has an essentially bounded density with respect to $\nu$ (which implies that $\nu_n$ is absolutely continuous with respect to $\nu$) for any $n\in\mathbb{N}$, then 
\begin{center}
$\liminf_{n\rightarrow\infty} dim_C(\nu_n)\geq dim_C(\nu)$.
\end{center} 
\end{corollary}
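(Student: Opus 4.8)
The plan is to establish the stronger \emph{pointwise} estimate $dim_C(\nu_n)\geq dim_C(\nu)$ for every fixed $n$, from which the claimed bound on the lower limit follows trivially, since then $\liminf_{n\to\infty} dim_C(\nu_n)\geq \liminf_{n\to\infty} dim_C(\nu)=dim_C(\nu)$. In other words, the hypothesis on the densities should force a uniform one-sided comparison between each $dim_C(\nu_n)$ and $dim_C(\nu)$, and the passage to the limit in $n$ is then immediate.

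To get the pointwise estimate I would first invoke the Radon--Nikodym theorem: because $\nu_n\ll\nu$ with an essentially bounded density, there is a $\nu$-measurable $f_n$ with $d\nu_n=f_n\,d\nu$ and $M_n:=\|f_n\|_{L^\infty(\nu)}<\infty$. Substituting this into the correlation integral and bounding each occurrence of the density by $M_n$ gives
\[
\int\nu_n(B(x,r))\,d\nu_n(x)=\int\Big(\int_{B(x,r)}f_n(y)\,d\nu(y)\Big)f_n(x)\,d\nu(x)\leq M_n^2\int\nu(B(x,r))\,d\nu(x).
\]
This comparison of correlation integrals by the multiplicative constant $M_n^2$ is exactly the mechanism behind \cite[Theorem 2.6]{MMR}.

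Writing $I_n(r)$ and $I(r)$ for the two correlation integrals, the estimate reads $I_n(r)\leq M_n^2 I(r)$. Taking logarithms and dividing by $\log r$, which is negative for small $r$ and hence reverses the inequality, yields $\frac{\log I_n(r)}{\log r}\geq \frac{\log M_n^2}{\log r}+\frac{\log I(r)}{\log r}$. Letting $r\to0$, the first term on the right tends to $0$ (its numerator is fixed while $\log r\to-\infty$) and the second tends to $dim_C(\nu)$; since for the measures in our concern the limit defining $dim_C(\nu_n)$ exists by the Remark following Definition \ref{def6}, we conclude $dim_C(\nu_n)\geq dim_C(\nu)$, and taking $\liminf$ over $n$ finishes the argument.

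The main point requiring care is the bookkeeping around the sign of $\log r$ when translating the integral inequality into a dimension inequality, together with checking that the additive term $\log M_n^2/\log r$ is genuinely harmless in the limit. If one prefers not to assume the limits defining $dim_C$ exist, the very same computation carried out separately with $\limsup_{r\to0}$ and $\liminf_{r\to0}$ delivers the one-sided comparison for both the upper and the lower correlation dimensions, which is all that is needed here.
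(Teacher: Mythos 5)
Your proof is correct and follows essentially the same route as the paper: the paper simply invokes \cite[Theorem 2.6]{MMR} to get the pointwise inequality $dim_C(\nu_n)\geq dim_C(\nu)$ from the essentially bounded density and then passes to the $\liminf$, while you additionally supply the (correct) proof of that cited comparison via the bound $\int\nu_n(B(x,r))\,d\nu_n\leq M_n^2\int\nu(B(x,r))\,d\nu$ and the sign of $\log r$. The extra care you take about the harmlessness of $\log M_n^2/\log r$ and about upper/lower correlation dimensions is sound but does not change the argument.
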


The restriction on the sequence of the measures $\{\nu_n\}_{n=1}^\infty$ can be relaxed, by \cite[Theorem 2.10]{MMR}, to give the following lower semi-continuity result for $dim_{MC}$ in due course. 
\begin{corollary}
For any sequence of  measures $\{\nu_n\in\mathcal{M}(X)\}_{n=1}^\infty$, if  $\nu_n$ is absolutely continuous with respect to $\nu$ for any $n\in\mathbb{N}$, then 
\begin{center}
$\liminf_{n\rightarrow\infty} dim_{MC}(\nu_n)\geq dim_{MC}(\nu)$.
\end{center} 
\end{corollary}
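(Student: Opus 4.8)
The key realization guiding the plan is that the stated conclusion is not genuinely a statement about convergence of $\{\nu_n\}$ --- no mode of convergence is assumed --- but rather a \emph{termwise} monotonicity statement. Concretely, I would first reduce the corollary to the claim that for every fixed $n\in\mathbb{N}$ one has $dim_{MC}(\nu_n)\geq dim_{MC}(\nu)$. Once this per-term inequality is in hand the conclusion is immediate: a sequence each of whose terms is bounded below by the fixed number $dim_{MC}(\nu)$ has $\liminf_{n\to\infty} dim_{MC}(\nu_n)\geq dim_{MC}(\nu)$. This is the entire logical skeleton, and it exactly parallels the previous corollary for $dim_C$, where the termwise bound $dim_C(\nu_n)\geq dim_C(\nu)$ already followed from the essentially-bounded-density hypothesis.

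The termwise inequality $dim_{MC}(\nu_n)\geq dim_{MC}(\nu)$ under the mere hypothesis $\nu_n\ll\nu$ is precisely the monotonicity of the modified correlation dimension supplied by \cite[Theorem 2.10]{MMR}, so the cleanest route is to invoke it directly. To explain why absolute continuity alone suffices here, whereas the unmodified $dim_C$ needed bounded density, I would write $f_n=\frac{d\nu_n}{d\nu}$ for the Radon--Nikodym derivative, which exists since $\nu_n\ll\nu$. For the unmodified dimension the comparison rests on a global bound $f_n\leq M$, which yields $\int\nu_n(B(x,r))\,d\nu_n\leq M^2\int\nu(B(x,r))\,d\nu$ and hence, after dividing by $\log r<0$ and letting $r\to 0$ so that the additive constant $2\log M/\log r\to 0$, gives $dim_C(\nu_n)\geq dim_C(\nu)$. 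When $f_n$ is only integrable and possibly unbounded this argument breaks down on the set where $f_n$ is large; the remedy is to \emph{truncate}, setting $E_M=\{x: f_n(x)\leq M\}$, on which the density is bounded and the comparison is available, and noting that $\nu_n(X\setminus E_M)=\int_{\{f_n>M\}} f_n\,d\nu\to 0$ as $M\to\infty$.

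The reason the truncation can be fed into the dimension is structural to the modification: the defining supremum of $dim_{MC}$ ranges over all sets $A$ with $\nu_n(A)\geq 1-\delta$, so once $M$ is large enough that $\nu_n(E_M)\geq 1-\delta$ the truncation set $E_M$ becomes an admissible competitor, and letting $\delta\to 0$, equivalently $M\to\infty$, recovers a lower bound for $dim_{MC}(\nu_n)$ in terms of the corresponding quantity for $\nu$. I expect the main obstacle to be exactly this bookkeeping: one must juggle the two nested limits, in $r$ and in $\delta$, against the supremum over admissible sets, match the good sets $E_M$ for $\nu_n$ with the near-optimal sets realizing $dim_{MC}(\nu)$, and control the inner ball-measure $\nu_n(B(x,r))$ on the truncated region, not merely the outer integrating measure. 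Since these are precisely the estimates packaged in \cite[Theorem 2.10]{MMR}, in the final write-up I would cite that theorem for the termwise inequality and reserve the displayed proof for the trivial, but conceptually clarifying, passage from the termwise bound to the $\liminf$.
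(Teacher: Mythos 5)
Your proposal is correct and matches the paper's intended argument: the paper offers no displayed proof but derives the corollary directly from \cite[Theorem 2.10]{MMR}, which gives the termwise inequality $dim_{MC}(\nu_n)\geq dim_{MC}(\nu)$ under $\nu_n\ll\nu$, after which the $\liminf$ bound is immediate. Your additional truncation discussion explaining why mere absolute continuity suffices here (versus bounded density for $dim_C$) is a faithful gloss on the cited theorem rather than a different route.
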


Considering Theorem \ref{thm5}, we still owe the readers an example of a convergent sequence of measures violating the upper semi-continuity of $dim_C$ and $dim_{MC}$ under the TV topology. The following example is an evolution of \cite[Example 2.5]{MMR}.

\begin{exm}[Mattila-Mor\'an-Rey]\label{exm7}
Let $a\in(0,1)$ be a real number. Consider the sequence $\{a^{n^2}\}_{n=0}^\infty\subset [0,1]$. 
Define a sequence of probability measures $\nu_n$ on $[0,1]$ to be 
\begin{center}
$\nu_n=\cfrac{1-a}{1-a^{n+1}}\sum_{i=0}^n\cfrac{a^i}{a^{i^2}-a^{{(i+1)}^2}}\mathfrak{L}^1|_{[a^{i^2},a^{{(i+1)}^2}]}$
\end{center}
for any $n\in\mathbb{N}$. Let $\nu$ be the probability measure on $[0,1]$ with
\begin{center}
$\nu=(1-a)\sum_{i=0}^\infty \cfrac{a^i}{a^{i^2}-a^{{(i+1)}^2}}\mathfrak{L}^1|_{[a^{i^2},a^{{(i+1)}^2}]}$.
\end{center}

\end{exm}

First we show the sequence of measures converges under the TV topology.
\begin{lemma}\label{lem7}
For the sequence of measures $\{\nu_n\}_{n\in\mathbb{N}}$ in Example \ref{exm7}, we have
\begin{center}
$\nu_n\stackrel{TV}{\rightarrow}\nu$ as $n\rightarrow\infty$.
\end{center}
 \end{lemma}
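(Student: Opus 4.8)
The plan is to exploit the fact that both $\nu_n$ and $\nu$ are absolutely continuous with respect to $\mathfrak{L}^1$ with explicit piecewise-constant densities supported on the same family of intervals, so that their difference has a transparent sign structure. Write $I_i=[a^{(i+1)^2},a^{i^2}]$ for the $i$-th interval, so that (because $a\in(0,1)$ forces $a^{i^2}>a^{(i+1)^2}$) one has $\mathfrak{L}^1(I_i)=a^{i^2}-a^{(i+1)^2}>0$, and the constant density $\tfrac{a^i}{a^{i^2}-a^{(i+1)^2}}$ integrates to exactly $a^i$ over $I_i$. First I would record the normalizations: the geometric series gives $\nu\big(\bigcup_{i\ge 0}I_i\big)=(1-a)\sum_{i=0}^\infty a^i=1$ and, for each $n$, $\nu_n\big(\bigcup_{i=0}^n I_i\big)=\tfrac{1-a}{1-a^{n+1}}\sum_{i=0}^n a^i=1$, confirming that both are probability measures.

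The crucial structural point is that on $E_n:=\bigcup_{i=0}^n I_i$ the two densities are proportional, namely $\nu_n|_{E_n}=\tfrac{1}{1-a^{n+1}}\,\nu|_{E_n}$ with $\tfrac{1}{1-a^{n+1}}>1$, while $\nu_n$ vanishes on $\bigcup_{i>n}I_i$ where $\nu$ still carries its tail mass. Hence $\nu_n-\nu\ge 0$ on $E_n$ and $\nu_n-\nu\le 0$ off $E_n$, so in the definition $\Vert\nu_n-\nu\Vert_{TV}=\sup_{A\in\mathcal{B}}|\nu_n(A)-\nu(A)|$ the supremum is attained at $A=E_n$. I would then compute directly
\[
\nu_n(E_n)-\nu(E_n)=1-(1-a)\sum_{i=0}^n a^i=1-(1-a^{n+1})=a^{n+1},
\]
whence $\Vert\nu_n-\nu\Vert_{TV}=a^{n+1}\to 0$ as $n\to\infty$, which is exactly the assertion of the lemma.

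There is no genuine obstacle here: the argument is essentially the identification of the Hahn decomposition of the signed measure $\nu_n-\nu$, and the extremal set for the supremum is the positivity set $E_n$. The only points demanding a little care are the orientation of the intervals $[a^{i^2},a^{(i+1)^2}]$ and the claim that the supremum defining $\Vert\cdot\Vert_{TV}$ is attained at $E_n$; both rest on the elementary inequality $\tfrac{1}{1-a^{n+1}}>1$. As a cross-check one could instead invoke $\Vert\nu_n-\nu\Vert_{TV}=\tfrac12\int|f_n-f|\,d\mathfrak{L}^1$ and obtain the same value $a^{n+1}$, but the set-based computation is cleaner and matches the definition of the TV metric used in the paper.
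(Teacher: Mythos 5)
Your proof is correct and follows essentially the same route as the paper: a direct evaluation of $\Vert\nu_n-\nu\Vert_{TV}$ from the sign structure of $\nu_n-\nu$, with the supremum attained on the positivity set $E_n=\bigcup_{i=0}^n I_i$. In fact your value $a^{n+1}$ is the right one for the paper's definition $\sup_{A\in\mathcal{B}}|\nu_n(A)-\nu(A)|$ (the paper records $\max\{a^{n+1},\,a^{n+1}/(1-a^{n+1})\}$, but since both measures are probabilities the excess on $E_n$ and the deficit on its complement both equal $a^{n+1}$); either way the quantity tends to $0$ and the lemma follows.
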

 
\begin{proof}
In this case we have the following estimation on the total variation of the difference between the two measures $\nu_n$ and $\nu$,
\begin{center}
$\|\nu_n-\nu\|_{TV}=\max\{a^{n+1},\cfrac{a^{n+1}}{1-a^{n+1}}\}=\cfrac{a^{n+1}}{1-a^{n+1}}$
\end{center}
for any $n\in\mathbb{N}$. So $\lim_{n\rightarrow\infty}\|\nu_n-\nu\|_{TV}=0$, which justifies its convergence under the TV topology.
\end{proof} 

Although the correlation dimension and modified correlation dimension appear as global concepts, they are sometimes dominated by some properties on a subset of $X$ of relatively small measure, as indicated by the following result.

\begin{lemma}\label{lem6}
Let $\nu\in\mathcal{M}(X)$ be a probability measure. For any $r$ small enough, if there exists a subset $Y_r\subset X$ of positive measure, such that for \emph{a.e.} $x\in Y_r$, the following estimation holds,
\begin{equation}\label{eq28}
\nu(B(x,r))\nu(Y_r)\geq r^{o(1)},
\end{equation}
in which $o(1)$ is a positive term satisfying $\lim_{r\rightarrow 0} o(1)=0$. Then we have
\begin{center}
$dim_C(\nu)=dim_{MC}(\nu)=0$.
\end{center}
\end{lemma}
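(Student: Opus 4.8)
The plan is to estimate the correlation integral $\int \nu(B(x,r))\,d\nu$ from below using only the mass contributed by points in $Y_r$, and to show this lower bound forces the correlation dimension to vanish. First I would restrict the integral to the set $Y_r$ and invoke the hypothesis \eqref{eq28}: since $\nu(B(x,r))\geq r^{o(1)}/\nu(Y_r)$ for a.e.\ $x\in Y_r$, integrating over $Y_r$ gives
\begin{equation}\label{eq-plan-lower}
\int \nu(B(x,r))\,d\nu \geq \int_{Y_r}\nu(B(x,r))\,d\nu \geq \frac{r^{o(1)}}{\nu(Y_r)}\cdot\nu(Y_r)=r^{o(1)}.
\end{equation}
The key point here is that the awkward factor $\nu(Y_r)$ cancels exactly, so the smallness of the measure of $Y_r$ does not hurt the bound.

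Next I would take logarithms and divide by $\log r$. Because $\nu$ is a probability measure, we always have the trivial upper bound $\int \nu(B(x,r))\,d\nu\leq 1$, so $\log\int\nu(B(x,r))\,d\nu\leq 0$; combined with \eqref{eq-plan-lower} this traps the correlation integral between $r^{o(1)}$ and $1$. Since $\log r<0$ for small $r$, dividing the inequality $o(1)\log r\leq \log\int\nu(B(x,r))\,d\nu\leq 0$ by $\log r$ reverses the outer inequalities and yields
\begin{equation}\label{eq-plan-squeeze}
0\leq \frac{\log\int\nu(B(x,r))\,d\nu}{\log r}\leq o(1).
\end{equation}
Letting $r\to 0$ and using the defining property $\lim_{r\to 0}o(1)=0$, the squeeze in \eqref{eq-plan-squeeze} shows the limit in Definition \ref{def5} exists and equals $0$, so $dim_C(\nu)=0$.

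For the modified correlation dimension, I would argue that the very same estimate survives the passage to subsets of large measure. Indeed, taking $A=X$ (which has $\nu(A)=1\geq 1-\delta$ for every $\delta$) as an admissible competitor in the supremum of Definition \ref{def6}, the inner limit for $A=X$ is exactly $dim_C(\nu)=0$, giving $dim_{MC}(\nu)\geq 0$; the reverse inequality $dim_{MC}(\nu)\leq 0$ follows from the general chain $dim_H^L(\nu)\leq dim_C(\nu)\leq dim_{MC}(\nu)$ quoted in the excerpt only after one checks it cannot exceed $0$. More carefully, I would verify that for \emph{any} measurable $A$ with $\nu(A)\geq 1-\delta$, the lower bound \eqref{eq-plan-lower} applied with $Y_r\cap A$ in place of $Y_r$ (whose measure is still positive for $\delta$ small relative to $\nu(Y_r)$) produces the same $r^{o(1)}$-type estimate, so every inner limit is likewise squeezed to $0$ and the supremum over such $A$ is $0$.

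The main obstacle I anticipate is the modified-dimension part rather than the plain correlation integral. The cancellation in \eqref{eq-plan-lower} is clean, but the supremum in $dim_{MC}$ ranges over all large-measure sets $A$, and one must ensure the hypothesis \eqref{eq28} still delivers a nontrivial lower bound after intersecting $Y_r$ with $A$. The subtlety is that $Y_r$ may itself have small measure, so for a fixed $\delta$ one needs $\nu(Y_r)>\delta$ (or an appropriate relationship between $\delta$ and the $r$-regime) to guarantee $\nu(Y_r\cap A)>0$; handling the order of the limits $\lim_{\delta\to 0}$ and $\lim_{r\to 0}$ correctly, so that the $o(1)$ control is uniform enough to pass through the supremum, is where the argument requires the most care.
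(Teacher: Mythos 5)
Your treatment of $dim_C$ is correct and is essentially the paper's own argument: the paper likewise bounds $\int \nu(B(x,r))\,d\nu(x)$ from below by $\nu(B(x,r))\nu(Y_r)\geq r^{o(1)}$, takes logarithms, divides by $\log r<0$, and lets $r\to 0$. Your version is in fact written more carefully than the paper's (you make explicit the restriction of the integral to $Y_r$, the cancellation of $\nu(Y_r)$, and the upper bound $\int\nu(B(x,r))\,d\nu\leq 1$ needed for the squeeze), so this half is fine.

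The gap is in the $dim_{MC}$ half, and you have correctly located it but not closed it --- nor can it be closed by the route you sketch. The hypothesis places no lower bound on $\nu(Y_r)$, and in the paper's intended application (Proposition \ref{pro1}, where $Y_r=[0,a^{n_r^2}]$ and $\nu(Y_r)=a^{n_r}\to 0$) one has $\nu(Y_r)\to 0$ as $r\to 0$. In the definition of $dim_{MC}$ the set $A$ is fixed \emph{before} the inner limit $r\to 0$ is taken, so for any fixed $\delta>0$ one may choose $A$ with $\nu(A)\geq 1-\delta$ that is disjoint from $Y_r$ for all sufficiently small $r$ (e.g.\ $A=[a^{N^2},1]$ in Example \ref{exm7}); then $\nu(A\cap Y_r)=0$ and the lower bound \eqref{eq-plan-lower} gives nothing for $\int_A\nu(B(x,r))\,d\nu$. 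Worse, for such an $A$ the restricted measure has bounded density, the inner limit equals $1$, and hence $dim_{MC}(\nu)=1$ for that example --- so the $dim_{MC}$ part of the statement does not follow from \eqref{eq28} alone. Your proposed fix would work only under an additional hypothesis such as $\liminf_{r\to 0}\nu(Y_r)>0$ (take $\delta$ below that infimum, so $\nu(A\cap Y_r)\geq\nu(Y_r)/2$ and the $r^{o(1)}$ bound survives up to a factor $1/2$). You should be aware that the paper's own proof is silent on this point: it only estimates the full-space correlation integral and then asserts the conclusion for $dim_{MC}$ as well, so it does not supply the step you are missing.
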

\begin{proof}
Under the above assumptions, we have
\begin{center}
$\int \nu(B(x,r))d\nu(x)\geq \nu(B(x,r))\nu(Y_r)$.
\end{center}
So 
\begin{center}
$\log \int \nu(B(x,r))d\nu(x)\geq\log \big(\nu(B(x,r))\nu(Y_r)\big)\geq o(1)\log r$,
\end{center}
which gives that
\begin{center}
$\cfrac{\log \int \nu(B(x,r))d\nu(x)}{\log r}\leq o(1)$.
\end{center}
The proof ends by letting $r\rightarrow 0$ in the above estimation.  
\end{proof}

Now we pay attention to the correlation and modified correlation dimensions of the limit measure of the sequence $\{\nu_n\}_{n\in\mathbb{N}}$ in Example \ref{exm7}.
\begin{proposition}[Mattila-Mor\'an-Rey]\label{pro1}
For the limit measure $\nu$ in Example \ref{exm7}, we have
\begin{center}
$dim_C(\nu)=dim_{MC}(\nu)=0$.
\end{center}
\end{proposition}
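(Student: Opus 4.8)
The plan is to apply Lemma~\ref{lem6} to the limit measure $\nu$, choosing the set $Y_r$ to be a single one of the building-block intervals, located deep enough towards the origin that it is completely swallowed by an $r$-ball. First I would record the elementary bookkeeping for $\nu$. Writing $I_i=[a^{(i+1)^2},a^{i^2}]$ for the $i$-th interval (recall $a\in(0,1)$, so $a^{i^2}>a^{(i+1)^2}$), its length is $\ell_i=a^{i^2}-a^{(i+1)^2}=a^{i^2}(1-a^{2i+1})$ and, since the density of $\nu$ on $I_i$ is the constant $(1-a)a^i/\ell_i$, its mass is
\begin{center}
$\nu(I_i)=(1-a)a^i$.
\end{center}
The decisive structural feature is the mismatch of scales: the masses decay only geometrically, $\nu(I_i)\asymp a^i$, whereas the lengths decay super-exponentially, $\ell_i\asymp a^{i^2}$, so the density $(1-a)a^{i-i^2}$ blows up as $i\to\infty$. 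This concentration near $0$ is what forces the correlation dimension down to $0$.

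Next I would make the scale-matching choice of $Y_r$. For each small $r>0$ let $i(r)$ be the least integer $i$ with $\ell_i\le r$; since $\ell_i$ decreases to $0$, this is well defined and $i(r)\to\infty$ as $r\to 0$. Put $Y_r=I_{i(r)}$, so $\nu(Y_r)=(1-a)a^{i(r)}>0$. If $x\in I_{i(r)}$, then every point of $I_{i(r)}$ lies within distance $\ell_{i(r)}\le r$ of $x$, whence $I_{i(r)}\subseteq B(x,r)$ and therefore
\begin{center}
$\nu(B(x,r))\ge \nu(I_{i(r)})=(1-a)a^{i(r)}$
\end{center}
for every $x\in Y_r$.

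It then remains to check the exponent hypothesis (\ref{eq28}). Combining the two displays gives $\nu(B(x,r))\,\nu(Y_r)\ge\big((1-a)a^{i(r)}\big)^2$ for every $x\in Y_r$, so I would set
\begin{center}
$o(1):=\cfrac{\log\big((1-a)^2a^{2i(r)}\big)}{\log r}=\cfrac{2\log(1-a)+2i(r)\log a}{\log r}>0$,
\end{center}
which makes (\ref{eq28}) hold with equality. The crux is to verify $o(1)\to 0$. From $\ell_{i(r)}\le r<\ell_{i(r)-1}$ and $\ell_i\asymp a^{i^2}$ one gets $i(r)=\Theta\big(\sqrt{\log(1/r)}\big)$; in particular $i(r)=o(\log(1/r))$, so the numerator $\asymp i(r)\log a$ is of smaller order than the denominator $\log r\asymp -\log(1/r)$, and the ratio tends to $0$. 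With this $o(1)$, Lemma~\ref{lem6} applies and yields $dim_C(\nu)=dim_{MC}(\nu)=0$.

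The only genuine subtlety is the quantitative scale estimate $i(r)=\Theta\big(\sqrt{\log(1/r)}\big)$: one must confirm that the harmless factor $1-a^{2i+1}$ does not disturb the asymptotics $\ell_i\asymp a^{i^2}$, and that the resulting exponent $\big(i(r)\log a\big)/\log r\sim 1/i(r)$ really vanishes. Everything else is the routine bookkeeping recorded above, together with a direct appeal to Lemma~\ref{lem6}.
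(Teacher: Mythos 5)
Your proof is correct and follows essentially the same route as the paper: both apply Lemma~\ref{lem6} with a set $Y_r$ near the origin of diameter at most $r$ but measure only geometrically small in an index $\asymp\sqrt{\log(1/r)}$, so that the exponent $\asymp i(r)/i(r)^2$ vanishes. The only (immaterial) difference is that you take $Y_r$ to be the single block $I_{i(r)}$ while the paper takes the whole tail $[0,a^{n_r^2}]$; the measures and scales are comparable, and your version even fixes a sign typo in the paper's choice of $n_r$.
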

\begin{proof}
We achieve the conclusion by Lemma \ref{lem6}. Suppose $r$ is a number small enough such that
\begin{equation}\label{eq29}
a^{n_r^2}\leq r<a^{(n_r+1)^2}
\end{equation}
for some $n_r\in\mathbb{N}$. Let $Y_r=Y_{n_r}=[0,a^{n_r^2}]$, then we have
\begin{center}
$\nu(Y_{n_r})=(1-a)(a^{n_r}+a^{n_r+1}+\cdots)=(1-a)\cfrac{a^{n_r}}{(1-a)}=a^{n_r}$.
\end{center}
For any $x\in Y_{n_r}$, since $Y_{n_r}\subset B(x,r)$, we have
\begin{center}
$\nu(B(x,r))\geq \nu(Y_{n_r})=a^{n_r}$.
\end{center}
So 
\begin{center}
$\nu(B(x,r))\nu(Y_r)\geq a^{2n_r}$
\end{center}
for any $x\in Y_r$. Considering (\ref{eq29}), the condition (\ref{eq28}) is satisfied in this case. So by 
Lemma \ref{lem6}, we have
\begin{center}
$dim_C(\nu)=dim_{MC}(\nu)=0$.
\end{center}
\end{proof}

Obviously for the sequence of measures $\{\nu_n\}_{n\in\mathbb{N}}$ in Example \ref{exm7}, we have
\begin{center}
$dim_C(\nu_n)=dim_{MC}(\nu_n)=1$.
\end{center}
Combining this with Lemma \ref{lem7} and Proposition \ref{pro1}, we conclude that the measure dimension mappings $dim_C$ and $dim_{MC}$ can not be upper semi-continuous under the TV topology in general.

We end the section by some results on comparing the dimensions between two comparable measures.
\begin{lemma}\label{lem3}
For any two measures $\nu_1, \nu_2\in\mathcal{M}(X)$, if $\nu_1$ is absolutely continuous with respect to $\nu_2$, then the induced upper measure-dimension mapping satisfies
\begin{center}
$\dim^U(\nu_1)\leq \dim^U(\nu_2)$.
\end{center}
\end{lemma}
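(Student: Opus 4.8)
The plan is to reduce the claimed inequality to a set-containment relation between the two collections of ``full-measure'' sets over which the respective infima are taken, and then to invoke the elementary fact that an infimum over a larger index set can only decrease.

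First I would unwind the definition: for $i=1,2$ set
\[
\mathcal{A}_i=\{A\in\mathcal{B}:\nu_i(X\setminus A)=0\},
\]
so that by definition $\dim^U(\nu_i)=\inf_{A\in\mathcal{A}_i}\dim(A)$. The entire content of the lemma is then encoded in the monotonicity of this infimum with respect to the index set $\mathcal{A}_i$.

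The key step is to establish the containment $\mathcal{A}_2\subseteq\mathcal{A}_1$. Let $A\in\mathcal{A}_2$ be arbitrary, so that $\nu_2(X\setminus A)=0$. Since $\nu_1$ is absolutely continuous with respect to $\nu_2$, every $\nu_2$-null set is a $\nu_1$-null set; applying this to the set $X\setminus A$ gives $\nu_1(X\setminus A)=0$, i.e.\ $A\in\mathcal{A}_1$. This is the only place where the hypothesis $\nu_1\ll\nu_2$ is used.

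Finally, because $\mathcal{A}_2\subseteq\mathcal{A}_1$, the infimum defining $\dim^U(\nu_1)$ ranges over a collection at least as large as that defining $\dim^U(\nu_2)$, whence
\[
\dim^U(\nu_1)=\inf_{A\in\mathcal{A}_1}\dim(A)\leq\inf_{A\in\mathcal{A}_2}\dim(A)=\dim^U(\nu_2),
\]
as required. I do not anticipate any genuine obstacle here: the absolute-continuity implication $\nu_2(X\setminus A)=0\Rightarrow\nu_1(X\setminus A)=0$ is immediate from the definition of $\nu_1\ll\nu_2$, and the monotonicity of the infimum under enlarging its domain is purely formal, so the argument is complete once the containment $\mathcal{A}_2\subseteq\mathcal{A}_1$ is recorded.
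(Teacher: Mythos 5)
Your proposal is correct and coincides with the paper's own proof: both arguments show that absolute continuity forces every $\nu_2$-full set $A$ (i.e.\ $\nu_2(X\setminus A)=0$) to be a $\nu_1$-full set, so the infimum defining $\dim^U(\nu_1)$ is taken over a larger collection and can only be smaller. No gaps.
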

\begin{proof}
For $A\in\mathcal{B}$,  if $\nu_2(X\setminus A)=0$, since $\nu_1$ is absolutely continuous with respect to $\nu_2$, we have $\nu_1(X\setminus A)=0$, so
\begin{center}
$dim^U(\nu_2)=\inf\{dim(A): \nu_2(X\setminus A)=0\}\geq\inf\{dim(A): \nu_1(X\setminus A)=0\}=dim^U(\nu_1)$.
\end{center}
\end{proof}
One is recommended to compare the lemma with \cite[p220(a)]{MMR}. These conclusions again show duality of the two measure-dimension mappings.
\begin{corollary}\label{cor2}
For any two measures $\nu_1, \nu_2\in\mathcal{M}(X)$, if $\nu_1$ is equivalent to  $\nu_2$, then 
\begin{center}
$\dim^U(\nu_1)= \dim^U(\nu_2)$
\end{center}
for any induced upper measure-dimension mapping.
\end{corollary}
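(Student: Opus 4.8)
The plan is to derive the corollary directly from Lemma \ref{lem3} by exploiting the symmetry inherent in the notion of equivalence of measures. Recall that two measures $\nu_1, \nu_2 \in \mathcal{M}(X)$ are \emph{equivalent} precisely when they are mutually absolutely continuous, that is, $\nu_1$ is absolutely continuous with respect to $\nu_2$ and, simultaneously, $\nu_2$ is absolutely continuous with respect to $\nu_1$. Equivalently, for the pair $\{i,j\}=\{1,2\}$, a Borel set $E$ satisfies $\nu_i(E)=0$ if and only if $\nu_j(E)=0$.

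First I would invoke Lemma \ref{lem3} applied to the ordered pair $(\nu_1,\nu_2)$: since $\nu_1$ is absolutely continuous with respect to $\nu_2$, the lemma yields $\dim^U(\nu_1)\leq \dim^U(\nu_2)$. Next I would apply the very same lemma with the roles of the two measures interchanged, i.e.\ to the pair $(\nu_2,\nu_1)$: since $\nu_2$ is absolutely continuous with respect to $\nu_1$, the lemma yields $\dim^U(\nu_2)\leq \dim^U(\nu_1)$. Combining these two opposite inequalities forces the equality $\dim^U(\nu_1)=\dim^U(\nu_2)$ for any induced upper measure-dimension mapping, which is the assertion of the corollary.

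There is essentially no obstacle here beyond bookkeeping: the entire content is carried by Lemma \ref{lem3}, which already establishes the monotonicity of $\dim^U$ under absolute continuity, and the corollary merely records that a monotone relation holding in both directions must collapse to equality. The only point deserving a moment's care is to confirm that Lemma \ref{lem3} is genuinely symmetric in its applicability---that nothing in its statement or proof privileges a particular labeling of the two measures---so that it applies to $(\nu_2,\nu_1)$ exactly as it does to $(\nu_1,\nu_2)$. Inspection of the proof of Lemma \ref{lem3}, which uses only that the null sets of the dominating measure are null sets of the dominated one, confirms this at once.
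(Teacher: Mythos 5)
Your proof is correct and matches the paper's intended argument: the corollary is stated immediately after Lemma \ref{lem3} precisely so that it follows by applying that lemma to both ordered pairs $(\nu_1,\nu_2)$ and $(\nu_2,\nu_1)$, since equivalence is mutual absolute continuity. Nothing further is needed.
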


See also \cite[Lemma 3.1(3)]{HS}.

\section{Applications of the semi-continuity results to dimensions of measures originated from dynamical systems}\label{sec5}

Most of our results until now are set on measures on ambient space $X$ without dynamics on it. In this section we focus on probability measures carrying some dynamical structures on $X$. 

Let $T: X\rightarrow X$ be a transformation. Denote by $\mathcal{M}_\sigma(X)$ and $\mathcal{M}_e(X)$ respectively to be the collections of all the invariant and ergodic probability measures on $(X,\mathcal{B})$, with respect to $T$. The invariant and ergodic probability measures are crucial in many dynamical systems, while dimensions of these measures usually reflect important properties on these dynamical systems. A transformation $T: X\rightarrow X$ is said to be \emph{inverse-dimension-expanding} with respect to some dimensional mapping $dim$ if there exists some $A\in\mathcal{B}$, such that 
\begin{center}
$dim(T^{-1}(A))>dim(A)$.
\end{center}

\begin{Young's Lemma}
For a transformation $T$ on $(X,\mathcal{B})$ such that $T$ is not inverse dimension-expanding with respect to $dim$, then we have
\begin{center}
$dim^L(\nu)= dim^U(\nu)$
\end{center}
for any ergodic measure $\nu\in \mathcal{M}_e(X)$ with respect to $T$.
\end{Young's Lemma} 
\begin{proof}
See \cite[P115]{You}.
\end{proof}

Combining Young's Lemma and Corollary \ref{cor2}, we have the following result.

\begin{corollary}
For two equivalent measures $\nu_1, \nu_2\in\mathcal{M}(X)$ with one of them being ergodic with respect to some non-inverse-dimension-expanding transformation $T$ on $X$, we have
\begin{center}
$\dim^L(\nu_1)=\dim^U(\nu_1)=\dim^L(\nu_2)=\dim^U(\nu_2)$.
\end{center}
\end{corollary}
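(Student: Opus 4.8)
The plan is to derive the fourfold equality by combining Young's Lemma with the equivalence-invariance of the two induced measure-dimension mappings, reducing all four quantities to a single common value. Without loss of generality I would suppose that $\nu_1$ is the measure that is ergodic with respect to the non-inverse-dimension-expanding transformation $T$; the case in which $\nu_2$ is the ergodic one is handled identically after swapping the roles of the two measures, which is legitimate precisely because equivalence of measures is a symmetric relation. It is worth stating this symmetry remark explicitly, since the hypothesis only guarantees that \emph{one} of the two measures is ergodic.

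First I would apply Young's Lemma to $\nu_1$. Since $T$ is not inverse-dimension-expanding and $\nu_1\in\mathcal{M}_e(X)$, it yields at once $\dim^L(\nu_1)=\dim^U(\nu_1)$. Next I would transport each of these two values across the equivalence $\nu_1\sim\nu_2$. For the upper mapping this is exactly Corollary \ref{cor2}, which gives $\dim^U(\nu_1)=\dim^U(\nu_2)$. For the lower mapping, Corollary \ref{cor2} does not apply directly, so I would establish the lower-dimension analogue of Lemma \ref{lem3} by rerunning its argument: if $\nu_1\ll\nu_2$, then any $A\in\mathcal{B}$ with $\nu_1(A)>0$ also satisfies $\nu_2(A)>0$, so $\{A:\nu_1(A)>0\}\subseteq\{A:\nu_2(A)>0\}$, and taking the infimum of $dim$ over the larger family yields $\dim^L(\nu_2)\leq\dim^L(\nu_1)$. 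Applying the same reasoning with the roles of $\nu_1$ and $\nu_2$ exchanged (valid because $\nu_2\ll\nu_1$ as well, by equivalence) gives the reverse inequality, hence $\dim^L(\nu_1)=\dim^L(\nu_2)$.

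Chaining these three identities together then produces $\dim^L(\nu_2)=\dim^L(\nu_1)=\dim^U(\nu_1)=\dim^U(\nu_2)$, which is the asserted equality. The argument is essentially bookkeeping over Young's Lemma and the two equivalence-invariance statements, so I do not expect a genuine obstacle. The only step requiring real care is the lower-mapping half: because Corollary \ref{cor2} is stated only for $\dim^U$, one must verify the dual monotonicity $\dim^L(\nu_2)\leq\dim^L(\nu_1)$ under $\nu_1\ll\nu_2$ by hand before invoking symmetry. Once that dual statement is in place, the rest of the proof collapses to concatenating equalities.
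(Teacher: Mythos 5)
Your proof is correct and takes essentially the same route as the paper, which simply combines Young's Lemma with Corollary \ref{cor2}. The one thing you add is the explicit $\dim^L$-analogue of Lemma \ref{lem3} (monotonicity of the lower mapping under absolute continuity, hence its invariance under equivalence); this is a detail the paper leaves implicit, since Corollary \ref{cor2} as stated covers only $\dim^U$, and your verification of it is right.
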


This corollary suggests a possible way to deal with the dimensions $\dim^L(\nu)$ and $\dim^U(\nu)$ of some (non-invariant) measure $\nu\in\mathcal{M}(X)$ simultaneously. That is, for some concerning measure $\nu\in\mathcal{M}(X)$, if one can find an ergodic measure $\nu_e\in\mathcal{M}_e(X)$ equivalent with $\nu$, then the dimension of  $\nu_e$ gives the dimension of $\nu$. As there are dynamical structure with respect to $\nu_e$, the dimension of  $\nu_e$ may be easier to be calculated. In case of non-existence of such an ergodic measure equivalent with $\nu$, one may resort to the ergodic decomposition of  $\nu$. The method still has some meaning for an invariant measure  $\nu$, though an invariant measure $\nu$ absolutely continuous with respect to an ergodic one $\nu_e$ necessarily satisfies
\begin{center}
$\nu=\nu_e$
\end{center}
according to \cite[P153, Remarks(1)]{Wal}.

In some cases it is difficult to decide the dimensions of ergodic (invariant) measures admitting some singularity (for example, when the measure has infinite entropy with respect to the transformation $T$). In these cases our semi-continuity of measures-dimension mappings under the setwise topology together with Young's Lemma provide a systematic way to approximate the dimensions of these singular ergodic (invariant) measures by sequences of dimensions of non-singular ergodic measures converging setwisely to the singular ones. The method usually involves the following steps.

\begin{enumerate}[1.]

\item For ones' target (singular or non-singular) measure $\nu$ on $X$ with a transformation $T$ being not inverse-dimension-expanding, find a sequence of non-singular measures $\{\nu_n\}_{n\in\mathbb{N}}$ converging to the target measure $\nu$ under the setwise topology. 

\item Show the existence of ergodic measures $\{\nu_n^*\}_{n\in\mathbb{N}}$ (with respect to some appropriate dynamical structures on $X$) equivalent with the ones $\{\nu_n\}_{n\in\mathbb{N}}$ respectively for any $n\in\mathbb{N}$ (sometimes they coincide with each other).

\item Apply Young's Lemma and our semi-continuity results-Theorem \ref{thm1} and \ref{thm4} to show the convergence of the sequence of dimensions of the  measures $\{\nu_n\}_{n\in\mathbb{N}}$ and $\{\nu_n^*\}_{n\in\mathbb{N}}$ to the dimension of the concerning measure $\nu$.
\end{enumerate}

For example, as a tool to tackle the obstacle of exploding entropy of some measure with respect to some partitions on the $X$ space, we have the following result, as an extension of \cite[Main Theorem]{You}.
\begin{corollary}\label{cor8}
Let $X$ be a (non-compact) two Riemannian manifold, $T: X\rightarrow X$ is a $C^2$ endomorphism. For an ergodic measure $\nu\in\mathcal{M}(X)$ with respect to $T$, if one can find a sequence of ergodic measures $\{\nu_n\in\mathcal{M}(X)\}_{n\in\mathbb{N}}$  with respect to a corresponding sequence of $C^2$ endomorphisms $\{T_n: X\rightarrow X\}_{n\in\mathbb{N}}$  such that
\begin{center}
$\nu_n\stackrel{s}{\rightarrow}\nu$ 
\end{center}
as $n\rightarrow\infty$ and $0\leq h_{\nu_n}(T_n), \lambda_1(T_n), \lambda_2(T_n)<\infty$ 
for any $n\in\mathbb{N}$, then
\begin{center}
$
\begin{array}{ll}
& dim_H^L(\nu)=dim_H^U(\nu)=\lim_{n\rightarrow\infty} dim_H^L(\nu_n)=\lim_{n\rightarrow\infty}  dim_H^U( \nu_n)\\
=&\lim_{n\rightarrow\infty} h_{\nu_n}(T_n)\Big(\cfrac{1}{\lambda_1(T_n)}-\cfrac{1}{\lambda_2(T_n)}\Big),
\end{array}
$
\end{center}
in which $\lambda_1(T_n)\geq \lambda_2(T_n)$ are the Lyapunov exponents of the maps $T_n$ and $T_n^{-1}$ respectively (they are constants for \emph{a.e.} $x\in X$ due to ergodicity of $T_n$).
\end{corollary}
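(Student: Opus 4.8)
The plan is to separate the two jobs done by the hypotheses. Young's dimension formula will supply the last equality in the asserted chain for each approximating pair $(\nu_n,T_n)$, while the setwise semi-continuity results will transport this information to the limit measure $\nu$, to which Young's formula need not apply directly.

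First, fix $n$ and apply \cite[Main Theorem]{You} to the ergodic measure $\nu_n$ with respect to the $C^2$ endomorphism $T_n$ on the two-dimensional Riemannian manifold $X$. Since $0\leq h_{\nu_n}(T_n),\lambda_1(T_n),\lambda_2(T_n)<\infty$, the hypotheses of Young's theorem are in force and it yields that $\nu_n$ is exact-dimensional, so that
\[
d_n:=dim_H^L(\nu_n)=dim_H^U(\nu_n)=h_{\nu_n}(T_n)\Big(\cfrac{1}{\lambda_1(T_n)}-\cfrac{1}{\lambda_2(T_n)}\Big).
\]
In particular the lower and upper Hausdorff measure-dimensions of each $\nu_n$ coincide; this is precisely the coincidence that Young's Lemma would furnish under a non-inverse-dimension-expanding hypothesis on $T_n$. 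From here on I work only with the single real sequence $\{d_n\}_{n=1}^\infty$.

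Next, I would feed the setwise convergence $\nu_n\stackrel{s}{\rightarrow}\nu$ into the semi-continuity machinery for the Hausdorff measure-dimension mappings. By Corollary \ref{cor4} (equivalently Theorem \ref{thm1}, since $dim_H$ is countably stable) we get $\liminf_{n\rightarrow\infty}d_n=\liminf_{n\rightarrow\infty}dim_H^U(\nu_n)\geq dim_H^U(\nu)$, and by Corollary \ref{cor3} (equivalently Theorem \ref{thm4}) we get $\limsup_{n\rightarrow\infty}d_n=\limsup_{n\rightarrow\infty}dim_H^L(\nu_n)\leq dim_H^L(\nu)$. Combining these with the elementary inequality $dim_H^L(\nu)\leq dim_H^U(\nu)$ and with $\liminf_{n\rightarrow\infty}d_n\leq\limsup_{n\rightarrow\infty}d_n$ yields the squeeze
\[
\limsup_{n\rightarrow\infty}d_n\leq dim_H^L(\nu)\leq dim_H^U(\nu)\leq\liminf_{n\rightarrow\infty}d_n\leq\limsup_{n\rightarrow\infty}d_n,
\]
which forces the whole chain to collapse to a single value. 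Thus $\lim_{n\rightarrow\infty}d_n$ exists and equals the common value $dim_H^L(\nu)=dim_H^U(\nu)$. Re-inserting the Young formula for $d_n$ then closes the full chain of equalities, and, pleasantly, the exact-dimensionality of the possibly singular limit $\nu$ drops out for free, without ever applying Young's theorem to $\nu$ itself.

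The soft squeeze is purely formal, so the real obstacle lives entirely in the first step: \cite[Main Theorem]{You} is classically proved for $C^2$ diffeomorphisms of \emph{compact} surfaces, whereas here $X$ is a non-compact two-manifold and the $T_n$ are endomorphisms, not diffeomorphisms. I would therefore have to check that Young's exact-dimensionality formula persists under both relaxations: controlling the Lyapunov charts and the local entropy-versus-measure-of-balls estimates over a non-compact phase space, and passing to the natural extension (inverse limit) in order to interpret the exponent $\lambda_2(T_n)$ associated with the non-invertible $T_n$. A related bookkeeping point is the sign and finiteness convention for $\lambda_1(T_n)$ and $\lambda_2(T_n)$: the expression $\frac{1}{\lambda_1(T_n)}-\frac{1}{\lambda_2(T_n)}$ must be read with Young's convention of one positive and one negative exponent so that it is nonnegative, and one should confirm that the finiteness assumptions rule out any degenerate exponent that would invalidate the formula.
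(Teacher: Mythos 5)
Your proof is correct and follows the same overall strategy as the paper: apply Young's dimension formula to each approximating pair $(\nu_n,T_n)$, then squeeze with Theorem \ref{thm1} and Theorem \ref{thm4}. The one genuine difference is how you close the squeeze. The paper invokes Young's Lemma on the limit measure $\nu$ (using its ergodicity under $T$) to get $dim_H^L(\nu)=dim_H^U(\nu)$ as an \emph{input}, and then sandwiches this common value between $\limsup_n dim_H^L(\nu_n)$ and $\liminf_n dim_H^U(\nu_n)$. You instead observe that, since Young's formula already gives $dim_H^L(\nu_n)=dim_H^U(\nu_n)=d_n$ for a single real sequence, the two semi-continuity inequalities together with the elementary inequality $dim_H^L(\nu)\leq dim_H^U(\nu)$ force the chain $\limsup_n d_n\leq dim_H^L(\nu)\leq dim_H^U(\nu)\leq\liminf_n d_n\leq\limsup_n d_n$ to collapse. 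This is slightly more economical: exact-dimensionality of the possibly singular limit $\nu$ comes out as a conclusion rather than being supplied by Young's Lemma, so you need no non-inverse-dimension-expanding hypothesis on $T$ itself. Your closing caveat --- that \cite[Main Theorem]{You} is stated for $C^2$ diffeomorphisms of compact surfaces, whereas here $X$ is non-compact and the $T_n$ are endomorphisms, so the applicability of Young's formula to the $\nu_n$ requires justification --- is a legitimate concern that the paper's proof does not address either; it is a gap in the statement's hypotheses rather than in your argument, and flagging it is appropriate.
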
 
\begin{proof}
First, considering Young's Lemma, apply \cite[Main Theorem]{You} to the measure $\nu_n$ with finite entropy and Lyapunov exponents, we have
\begin{equation}\label{eq9}
dim_H^L(\nu_n)=dim_H^U(\nu_n)=h_{\nu_n}(T_n)\Big(\cfrac{1}{\lambda_1(T_n)}-\cfrac{1}{\lambda_2(T_n)}\Big).
\end{equation}
Since $\nu_n\stackrel{s}{\rightarrow}\nu$ as $n\rightarrow\infty$,  apply Theorem \ref{thm1} to the sequence of measures, we get
\begin{equation}\label{eq6}
\liminf_{n\rightarrow\infty} dim^U (\nu_n)\geq dim^U (\nu),
\end{equation}
while applying \ref{thm4} to the sequence of measures, we get
\begin{equation}\label{eq7}
\limsup_{n\rightarrow\infty} dim^L (\nu_n)\leq dim^L (\nu).
\end{equation}
Due to the ergodicity and Young's Lemma we have $dim^U (\nu)=dim^L (\nu)$, combing with (\ref{eq6}) and (\ref{eq7}) together we get
\begin{equation}\label{eq8}
\limsup_{n\rightarrow\infty} dim^L (\nu_n)\leq dim^L (\nu)=dim^U (\nu)\leq \liminf_{n\rightarrow\infty} dim^U (\nu_n).
\end{equation}
(\ref{eq9}) and (\ref{eq8}) together justify the corollary.
\end{proof}

%\footnotesize{\section*{Acknowledgements}}

\end{document}